\theoremstyle{plain}
\newtheorem{thm}{Theorem}[section]
\newtheorem{prop}[thm]{Proposition}
\newtheorem{lem}[thm]{Lemma}
\newtheorem{cor}[thm]{Corollary}
\theoremstyle{definition}
\newtheorem{remark}[thm]{Remark}
\numberwithin{equation}{section}
\newcommand{\PP}{\mathbb{P}}  
\newcommand{\CC}{\mathbb{C}}  
\newcommand{\OO}{\mathcal{O}}  
\newcommand{\cC}{\mathcal C} 
\newcommand{\cE}{\mathcal E} 
\newcommand{\cI}{\mathcal I} 
\newcommand{\cP}{\mathcal P}  
\newcommand{\cR}{\mathcal R}  
\newcommand{\M}{\overline{M}}  
\begin{document}

\title{Counting differentials with fixed residues}

\author{Dawei Chen}

\author{Miguel Prado}

\address{Department of Mathematics, Boston College, Chestnut Hill, MA 02467, USA}
\email{dawei.chen@bc.edu, pradogod@bc.edu}

\thanks{Research partially supported by National Science Foundation Grant DMS-2001040 and Simons Travel Support for Mathematicians Grant 635235.}

\begin{abstract}
 We investigate the count of meromorphic differentials on the Riemann sphere possessing a single zero, multiple poles with prescribed orders, and fixed residues at each pole. Gendron and Tahar previously examined this problem with respect to general residues using flat geometry, while Sugiyama approached it from the perspective of fixed-point multipliers of polynomial maps in the case of simple poles. In our study, we employ intersection theory on compactified moduli spaces of differentials, enabling us to handle arbitrary residue conditions and provide a complete solution to this problem. We also determine interesting combinatorial properties of the solution formula.  
\end{abstract}

\maketitle

\setcounter{tocdepth}{1}
\tableofcontents

\section{Introduction}

Consider the tuples $\mu=(a,-b_1,\ldots,-b_n)\in \mathbb{Z}^{n+1}$ where $a\geq 0$, $b_1,\ldots,b_n>0$ with $n\geq 2$, and $\rho=(r_1,\ldots,r_n)\in \mathbb{C}^n$ such that
$$
    a-b_1-\cdots-b_n=-2 \quad {\rm and} \quad r_1+\cdots+r_n=0.
    $$
How many meromorphic differentials $\omega$ on $\PP^1$ possessing a single zero $z$ and $n$ distinct poles $p_1,\ldots, p_n$ satisfy that 
$${\rm div} (\omega)=az-b_1p_1-\cdots-b_np_n \quad {\rm and}\quad  {\rm Res}_{p_i} \omega=r_i$$ 
for $i=1,\ldots,n$? Namely, we want to count the number of meromorphic differentials of zero and pole type $\mu$ that have fixed residues given by $\rho$ at each pole. Denote this number by $N(\mu, \rho)$. We remark that in our setting the poles are labeled. 

This question was studied by Gendron and Tahar \cite{GT} in the setting of isoresidual fibration. Assigning to $\omega$ the tuple of its residues at the poles induces a map from the moduli space of differentials of type $\mu$ to $\CC^{n} = \{(r_1,\ldots, r_n)\}$ where the image of the map dominates the hyperplane parameterizing $r_1 + \cdots + r_n = 0$ due to the Residue Theorem. Then $N(\mu, \rho)$ is equal to the fiber cardinality of the map over $\rho$. The value of $N(\mu, \rho)$ depends on whether the entries of $\rho$ satisfy additional partial sum vanishings. If there exists a nonempty proper subset of indices $\emptyset\neq I\subsetneq\{1,\ldots, n\}$ such that 
$$\sum_{i\in I} r_i=0,$$ 
then we say that $\rho$ satisfies a partial sum vanishing indexed by $I$. Due to the total sum vanishing, $I$ and $I^c$ index the same partial sum vanishing condition. If $\rho$ does not satisfy any partial sum vanishings, we say that $\rho$ is general. 

To express the formulas compactly, we will frequently use the function
\begin{equation}
\label{eqn:s}
    f(a,n) = \frac{a!}{(a-(n-2))!}. 
\end{equation} 
For a subset $I\subset \{1,\ldots, n\}$, we will also use the notation 
$$ b_I  = \sum_{i\in I} b_i. $$

In \cite{GT} the following cases were resolved by counting certain graphs that arise from the flat geometric structure of meromorphic differentials. 

\begin{thm}[{\cite[Theorem 1.2 and Proposition 1.3]{GT}}]
\label{thm:intro}
\begin{enumerate}
    \item[{\rm (i)}] For general $\rho$ that does not satisfy any partial sum vanishings, we have 
    \begin{equation*}
      N(\mu,\rho)=f(a,n).  
    \end{equation*}
    \item[{\rm (ii)}] If $\rho$ satisfies exactly one partial sum vanishing indexed by $I$, then 
    \begin{align*}
       N(\mu,\rho) &=f(a,n)-f(b_I -1,|I|+1)\cdot f(a-b_I + 1,n-|I|+1). 
    \end{align*}
    \end{enumerate}
\end{thm}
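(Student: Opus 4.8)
The plan is to realize $N(\mu,\rho)$ as the degree of the isoresidual map on a compactified moduli space of differentials and to compute that degree, together with the drop of the generic fiber cardinality over a partial-sum-vanishing locus, by intersection theory. Let $X=\PP\Omega\overline{\mathcal M}_{0,n+1}(\mu)$ be the multi-scale compactification of the projectivized stratum of meromorphic differentials of type $\mu$ on genus-$0$ curves; since on $\PP^1$ a differential with prescribed zero and polar divisor is unique up to scale, the interior of $X$ is isomorphic to $\mathcal M_{0,n+1}$, so $\dim X=n-2$. On $X$ there is the tautological line bundle $\OO(-1)$ whose fiber is the line spanned by the differential; write $\xi=c_1(\OO(1))$. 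Each residue $r_i$ is a linear functional on the differential, hence a section of $\OO(1)$; the $n$ sections $r_1,\dots,r_n$ satisfy the single relation $\sum r_i=0$ and together define a dominant rational map $R\colon X\to\PP^{n-2}$ to the projective space of residue profiles up to scaling, which again has dimension $n-2$. The first step is to verify that $R$ is generically finite, that its general fiber consists of reduced points in the interior of $X$, and --- crucially for part (ii) --- that the residue profile of a boundary point of $X$ always satisfies a partial-sum vanishing coming from the residue theorem on a component of the degenerate curve (residues on lower-level components die in the limit, and each connected top-level component whose node to a lower level is a zero has residues summing to zero). Granting this, $N(\mu,\rho)=\deg R=\int_X\xi^{\,n-2}$ for general $\rho$.

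For part (i) it remains to evaluate $\int_X\xi^{\,n-2}$. I would use the description of $\xi$ on a compactified genus-$0$ stratum with a single zero in terms of the $\psi$-class at the zero and the boundary divisors (of the type established by Sauvaget and by Chen--M\"oller--Sauvaget), so that the top self-intersection expands into a sum of genus-$0$ $\psi$-class intersection numbers on $\overline{\mathcal M}_{0,n+1}$, which are evaluated by the string and dilaton equations (equivalently the multinomial formula). One then checks that the resulting combinatorial expression collapses to the falling factorial $f(a,n)=a(a-1)\cdots(a-n+3)$; in particular all dependence on the individual pole orders $b_i$ must cancel, and verifying this cancellation is the only nontrivial point. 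As a cross-check and a possible alternative argument I would also set up an induction on $n$: forgetting a marked point, or degenerating the residue at one pole, should yield the recursion $f(a,n)=a\cdot f(a-1,n-1)$, with base case $n=2$ where $X$ is a point and $\deg R=1$.

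For part (ii), assume $\rho$ satisfies exactly the partial-sum vanishing indexed by $I$, and set $H_I=\{\sum_{i\in I}r_i=0\}\cong\PP^{n-3}\subset\PP^{n-2}$. Then $N(\mu,\rho)$ is the number of interior points in a general fiber of $R$ over $H_I$, so comparison with the generic fiber gives $N(\mu,\rho)=f(a,n)-\sum_B m_B\,\deg(R|_B\colon B\to H_I)$, the sum over boundary divisors $B$ of $X$ dominating $H_I$, where $m_B$ is the vanishing order of the section $\sum_{i\in I}r_i$ along $B$. A dimension count, together with the observation above about which residues survive and which partial sums are forced to vanish, identifies the unique such $B=B_I$: its lower level is a single $\PP^1$ carrying the zero $z$, to which are attached at two nodes the two top-level $\PP^1$'s, one carrying the poles $\{p_i\}_{i\in I}$ with a zero of order $b_I-2$ at its node, the other carrying the poles $\{p_j\}_{j\notin I}$ with a zero of order $a-b_I$ at its node (when $|I|=1$ the $I$-component degenerates and $p_i$ sits directly on the zero-component). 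The contribution $m_{B_I}\deg(R|_{B_I})$ factors: through the product structure of $B_I$ it sees the isoresidual degrees $f(b_I-2,|I|)$ on the $I$-side and $f(a-b_I,n-|I|)$ on the $I^c$-side, each given by part (i) and the inductive hypothesis, and through the local multi-scale structure at the two nodes it sees the prong numbers $b_I-1$ and $a-b_I+1$. Using $f(k+1,m+1)=(k+1)f(k,m)$, the product of the two prong numbers with the product of the two sub-degrees is $f(b_I-1,|I|+1)\,f(a-b_I+1,n-|I|+1)$, the asserted correction term; its invariance under $I\leftrightarrow I^c$ is a built-in consistency check.

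The hard part is the boundary analysis in part (ii): proving that $B_I$ is the only boundary divisor meeting a general fiber over $H_I$, and above all extracting the precise multiplicity and degree --- disentangling $m_{B_I}$, the degree of $R|_{B_I}$ along the relative-scaling direction between the two top-level components, and the prong-matching/twist-group data at the two nodes --- so that everything assembles into exactly the two prong factors and the two sub-isoresidual degrees. This requires careful use of the local structure of the multi-scale compactification (normal bundles of boundary divisors, prongs, level-rotation tori) near $B_I$. By contrast the degree computation in part (i) is essentially a finite combinatorial identity once a formula for $\xi$ is in hand.
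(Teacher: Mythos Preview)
Your overall strategy matches the paper's: realize $N(\mu,\rho)$ as $\int_X\xi^{\,n-2}$ for general $\rho$, and for (ii) subtract the contribution of the unique relevant boundary divisor with two top components carrying the poles in $I$ and $I^c$ respectively. Your treatment of (ii) is essentially identical to the paper's --- the multiplicity $m_{B_I}$ is exactly the total twist $t_\Gamma=(b_I-1)(a-b_I+1)$, and the paper packages this by writing the residue divisor as $\mathcal{R}_I=\xi-\sum_{\Gamma\in G_I}t_\Gamma\delta_\Gamma$ and observing that $\int\xi^{n-3}\delta_\Gamma=0$ unless $\Gamma$ has exactly two top components and no marked poles on the bottom, which isolates your $B_I$.

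For (i), however, the paper's computation of $\int_X\xi^{\,n-2}$ differs from your sketch in two respects worth noting. First, it expands $\xi$ via the relation at a \emph{pole}, namely $\xi=(b_1-1)\psi_{p_1}+\sum_{\Gamma_\bot\ni p_1}t_\Gamma\delta_\Gamma$, rather than at the zero; since the unique zero is always on the bottom level, the analogous relation there is less convenient for this induction. Second --- and this is the device you are missing --- after iterating this relation and evaluating each boundary term by a Segre-class computation together with the induction hypothesis, the paper does \emph{not} verify the resulting combinatorial identity directly. Instead it sets $P(b_1,\ldots,b_n)=f(a,n)-\int_X\xi^{\,n-2}$, notes $\deg P\le n-2$, checks $P|_{b_1=1}=0$ (only a single graph survives and contributes exactly $f(a,n)$), and by symmetry concludes $\prod_i(b_i-1)\mid P$, hence $P\equiv 0$. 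This polynomial-divisibility trick is the paper's main technical device and recurs throughout. Your proposed recursion $f(a,n)=a\cdot f(a-1,n-1)$ via ``forgetting a pole or degenerating a residue'' is not how the paper proceeds, and making it work directly on the intersection-number side would require an argument of comparable force to the one above.
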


In this paper we study this question by using intersection theory on compactified moduli spaces of differentials and provide a complete solution for arbitrarily given residues as follows. 

\begin{thm}
\label{thm:main}
Let $\mu=(a,-b_1,\dots,-b_n)$ and $\rho=(r_1,\ldots,r_n)$ as defined above. Then the number $N(\mu,\rho)$ of meromorphic differentials on $\PP^1$ of type $\mu$ with 
 residues given by $\rho$ is 
\begin{equation}
\label{eqn:main}
    N(\mu,\rho) = \sum_{s=1}^S (-1)^{s-1}(a+1)^{s-2}\sum_{\mathcal{J}_s\in \mathcal{C}_s} \prod_{J_j\in \mathcal{J}_s} f(b_{J_j}-1,|J_j|+1)
\end{equation}               
where the set $\mathcal{C}_s$ consists of collections $\mathcal{J}_s=\{J_1,\ldots,J_s\}$ such that $J_1, \ldots, J_s$ are nonempty  disjoint subsets that partition 
$\{1,\ldots,n\}$ and $\sum_{j\in J_i} r_j = 0$ for $i = 1,\ldots, s$, and $S$ is the maximum value of such $s$. 
\end{thm}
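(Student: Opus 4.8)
The plan is to realize $N(\mu,\rho)$ as an intersection number on a compactified moduli space of differentials on genus-zero curves, following the multi-scale differential framework (Bainbridge--Chen--Gendron--Grushevsky--Möller). The residue map $\omega \mapsto (\mathrm{Res}_{p_1}\omega, \ldots, \mathrm{Res}_{p_n}\omega)$ extends to a proper map on the compactification, and $N(\mu,\rho)$ is the degree of this map onto the hyperplane $\{r_1+\cdots+r_n=0\}$, which can be computed as the top self-intersection of the pullback of a hyperplane class (equivalently, a suitable $\psi$- or $\xi$-class) on the projectivized Hodge-type bundle. The key point is that this degree is independent of $\rho$ as a cohomology class computation, but the actual fiber cardinality over a \emph{special} $\rho$ differs from the generic degree because the boundary of the compactification contributes: when $\rho$ satisfies partial sum vanishings, certain boundary strata enter the fiber. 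So the strategy is: (1) compute the generic degree, recovering $f(a,n)$ from Theorem~\ref{thm:intro}(i); (2) analyze which boundary strata of the multi-scale compactification meet the fiber over a given $\rho$; (3) show each such stratum is itself (a product of) smaller moduli spaces of the same type, so its contribution is a product of $f(b_{J_j}-1,|J_j|+1)$ factors; (4) sum with inclusion-exclusion signs, producing \eqref{eqn:main}.

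**The recursive boundary structure.** The heart of the argument is Step (2)--(3). A two-level degeneration corresponds to splitting $\{1,\ldots,n\}$ into the pole-subset $I$ carried by a lower-level component and its complement, with the zero $z$ on the top component; for such a stratum to parametrize actual differentials (not just twisted ones) with the prescribed residues, the residues on the lower component must themselves sum to zero, i.e. $\sum_{i\in I} r_i = 0$. The lower-level piece is then a moduli space of differentials on $\PP^1$ with one zero (at the node, of order $b_I - 2$, coming from $a - b_{I^c} = b_I - 2$) and poles of orders $b_i$, $i\in I$, plus a simple pole at the node — wait, more precisely the local structure at the node forces the node-pole to have order $b_I$, matching the "$+1$" shift in $f(b_I - 1, |I|+1)$. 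Iterating, a maximal degeneration corresponds to a partition $\mathcal{J}_s$ satisfying the partial-sum-zero condition on each block, contributing $\prod_j f(b_{J_j}-1,|J_j|+1)$; the factor $(a+1)^{s-2}$ and the sign $(-1)^{s-1}$ arise from the combinatorics of the top-level component (a differential on $\PP^1$ with one zero and $s$ poles at the nodes, whose count is governed by $f$ evaluated at the top, which collapses to a power of $a+1$ after the dimension bookkeeping) together with the excess-intersection/inclusion-exclusion corrections needed to pass from "closure of the fiber" to "actual fiber points".

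**Carrying it out.** Concretely, I would: first set up the incidence variety $\{(\omega, \mathbf{p}, z)\}$ and its map to $\CC^{n-1}$, identify the relevant line bundle whose $(n-3)$-fold power computes the degree (the dimension of the projectivized stratum of type $\mu$ is $n-2$, and fixing the $n-1$ independent residues cuts it to a finite set); second, use the known class computations on $\BM_{0,n+1}$-type spaces — essentially the string/dilaton relations and the formula for $\psi$-monomials — to evaluate the generic term and get $f(a,n)$; third, stratify by the boundary, and for each stratum invoke the same setup recursively on the lower-level components (this is where "assume results stated earlier" does little, so I must prove a clean gluing/product statement for residue-constrained counts); fourth, assemble via Möbius inversion over the poset of partial-sum-vanishing partitions. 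The main obstacle I anticipate is Step (2)--(3): showing precisely which twisted differentials on the boundary are smoothable with the given residues held fixed, and correctly accounting for multiplicities — in particular proving that the top-level contribution is exactly $(a+1)^{s-2}$ and that no lower-dimensional strata contribute spuriously. This requires a careful analysis of the global residue condition (the GRC of BCGGM) and of the local intersection multiplicities along each boundary divisor, which is the technical core of the paper.
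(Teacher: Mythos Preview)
Your proposal follows the natural heuristic --- specialize from a general fiber, identify boundary contributions, and assemble via inclusion--exclusion (M\"obius inversion over the poset of partial-sum-vanishing partitions). The paper explicitly discusses this heuristic in the introduction and rejects it: ``The structure of the formula seems to suggest a heuristic argument by specializing from a general fiber to a special fiber in the isoresidual fibration and then applying the inclusion--exclusion principle. However, the powers of $a+1$ in the formula do not have a natural explanation this way, which seems to purely come from combinatorial computations.'' This is precisely the gap in your Step~(4). You write that the factor $(a+1)^{s-2}$ ``collapses to a power of $a+1$ after the dimension bookkeeping'' from the top-level component, but you give no mechanism for this, and indeed there is no direct geometric reason it should: the top-level component is a $\PP^1$ with one zero and $s$ nodal poles, and its honest count is $f(a,s)$, not $(a+1)^{s-2}$. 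Similarly, a naive inclusion--exclusion over partitions does not produce the correct signs and multiplicities once three or more blocks are involved, because the strata are not simply nested.

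The paper's actual route is different. It expresses $N(\mu,\rho)$ as $\int_{\cR_k}\xi^{n-2-k}$, where $\cR_k$ is the closure of the locus cut out by $k$ independent partial sum vanishings, and then establishes the recursion
\[
\int_{\cR_k}\xi^{n-2-k}=\int_{\cR_{k-1}}\xi^{n-1-k}-\sum_{\Gamma\in G_k}t_\Gamma\int_{\cR_{k-1}}\delta_\Gamma\xi^{n-2-k}
\]
via the divisor relation $\cR_k=\xi-\sum t_\Gamma\delta_\Gamma$ in $\cR_{k-1}$. Each boundary term factors (by a Segre class computation on the projectivized sum of tautological bundles) into a product $\prod_{i=0}^m N(\mu_i,\rho_i)$ over the components of $\Gamma$, to which induction applies. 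But the resulting sum does not visibly equal \eqref{eqn:main}: the proof concludes by introducing auxiliary polynomials $R_{\mathcal{J}_s}(b_{J_1},\ldots,b_{J_s})$ measuring the discrepancy, showing $\deg R_{\mathcal{J}_s}\le s-2$, and then proving that each variable $b_{J_i}$ divides $R_{\mathcal{J}_s}$ (via a delicate cancellation among three cases of graph modifications when one sets $b_{J_1}=0$), forcing $R_{\mathcal{J}_s}\equiv 0$. The $(a+1)^{s-2}$ emerges from this polynomial identity, not from geometry. A separate combinatorial lemma handles $\rho=\underline{0}$ by the same degree-and-divisibility trick. Your outline contains the right moduli-theoretic setup and the correct identification of the leading term and the product structure of boundary contributions, but it is missing this polynomial-identity argument, which is the technical core.

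One smaller correction: in your description of the two-level degeneration you place the zero $z$ on the top component and the poles on the bottom. In this problem it is the reverse --- the unique zero forces the bottom level to be a single component containing $z$, and the marked poles sit on top (after semistable bubbling). This matters for correctly reading off the twist factors $t_\Gamma$ and the residue conditions induced on the bottom by the global residue condition.
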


Although the formula \eqref{eqn:main} looks bulky, it has a simple combinatorial structure. It begins with the number $f(a,n)$ for the case of general residues in Theorem~\ref{thm:intro} (i), next subtracts the sum of products of such numbers as in (ii) over all pairs $\{J, J^c\}$ where $J$ is a partial sum vanishing satisfied by $\rho$, then adds back the sum of products of such numbers over all triples $\{J_1, J_2, J_3\}$ where the $J_i$ form a disjoint union of $\{1,\ldots,n\}$ and are partial sum vanishings satisfied by $\rho$, and so on. These alternating summands are further multiplied by twists of associated level graphs (see~\eqref{eqn:twist}) where these twisting factors arise from the number of positive horizontal directions at the zero of a differential (i.e., the number of prongs as defined in \cite[Section 5.4]{BCGGM3}). The structure of the formula seems to suggest a heuristic argument by specializing from a general fiber to a special fiber in the isoresidual fibration and then applying the inclusion--exclusion principle. However, the powers of $a+1$ in the formula do not have a natural explanation this way, which seems to purely come from combinatorial computations. Therefore, our actual proof takes a different approach by establishing a delicate recursion of intersection numbers and then simplifying it to the desired expression. In the course of the proof we also obtain some interesting combinatorial properties of $N(\mu, \rho)$ as follows. 
\begin{cor}
\label{cor:numerical}
For $\mu = (a, -b_1,\ldots, -b_n)$ and any residue tuple $\rho$ that contains a nonzero entry, $N(\mu, \rho)$ is a polynomial of top degree $n-2$ in the variables $b_1,\ldots, b_n$. Moreover, for two residue tuples $\rho$ and $\rho'$ where every partial sum vanishing of $\rho$ holds for $\rho'$ but not vice versa, we have $N(\mu, \rho) > N(\mu, \rho')$. In particular, $N(\mu, \rho) = 0$ if and only if $\rho$ is identically zero.  
\end{cor}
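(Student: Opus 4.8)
The plan is to read all three assertions off the closed formula \eqref{eqn:main}, using the hypothesis $n\ge 2$ together with one elementary geometric fact. For the polynomiality, note first that in \eqref{eqn:main} the $s=1$ summand is $(a+1)^{-1}f(b_{[n]}-1,n+1)$, and since $b_{[n]}=a+2$ one has $f(a+1,n+1)=(a+1)f(a,n)$, so this summand equals $f(a,n)$; for $s\ge 2$ the factor $(a+1)^{s-2}$ is an honest polynomial, and each $f(b_{J_j}-1,|J_j|+1)=\prod_{\ell=1}^{|J_j|-1}(b_{J_j}-\ell)$ is a polynomial in the $b_i$ of degree $|J_j|-1$, so the $\mathcal J_s$-summand has degree $(s-2)+\sum_{J_j\in\mathcal J_s}(|J_j|-1)=n-2$. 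Hence $N(\mu,\rho)$ is a polynomial in $b_1,\dots,b_n$ of degree at most $n-2$, equal to $n-2$ exactly when its degree-$(n-2)$ homogeneous part
\[
\Phi_\rho \;=\; \sum_{s=1}^{S}(-1)^{s-1}b_{[n]}^{\,s-2}\sum_{\mathcal J_s\in\mathcal C_s}\ \prod_{J_j\in\mathcal J_s}b_{J_j}^{\,|J_j|-1}
\]
(with the $s=1$ term read as $b_{[n]}^{\,n-2}$) is nonzero.

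To prove $\Phi_\rho\neq 0$ when $\rho$ has a nonzero entry, put $Z=\{i:r_i=0\}$ and fix $j_0$ with $r_{j_0}\neq 0$; since the residues sum to zero, $|[n]\setminus Z|\ge 2$, so $k:=n-2-|Z|\ge 0$. I would show that the coefficient in $\Phi_\rho$ of the degree-$(n-2)$ monomial $m^\ast=\bigl(\prod_{i\in Z}b_i\bigr)\,b_{j_0}^{\,k}$ equals $(n-2)!/k!>0$ and comes entirely from the one-block partition. The mechanism: in any $\rho$-admissible partition no block can meet $[n]\setminus Z$ in exactly one index, since its residue sum would then be a single nonzero residue. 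Consequently, when one extracts the coefficient of $m^\ast$ from the contribution of an admissible partition with $\ge 2$ blocks — which forces the exponent of $b_j$ to be $0$ for all $j\in([n]\setminus Z)\setminus\{j_0\}$ — every block not containing $j_0$ must in fact lie inside $Z$, so all of $[n]\setminus Z$ lies in the block $J_0\ni j_0$; but then $b_{J_0}^{\,|J_0|-1}$, after deleting the variables $b_j$ with $j\in([n]\setminus Z)\setminus\{j_0\}$, is homogeneous of degree $|J_0|-1$ in the $1+|J_0\cap Z|$ remaining variables, so each of its monomials that is squarefree in the $Z$-variables carries $b_{j_0}$ to a power at least $|[n]\setminus Z|-1>k$ — too large. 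Thus only $\mathcal J=\{[n]\}$ contributes, yielding the positive coefficient, so $\deg N(\mu,\rho)=n-2$.

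For the remaining claims, observe that $N(\mu,\rho)=\sum_{\mathcal J\in\mathcal A(\rho)}w(\mathcal J)$, where $\mathcal A(\rho)=\bigcup_s\mathcal C_s$ is the set of $\rho$-admissible partitions and $w(\mathcal J)=(-1)^{|\mathcal J|-1}(a+1)^{|\mathcal J|-2}\prod_{J\in\mathcal J}f(b_J-1,|J|+1)$; the essential structural point is that $\mathcal A(\rho)$ is a filter in the partition lattice $\Pi_n$, because coarsening a partition only merges blocks and hence preserves the vanishing of all block-residue-sums. I would first pin down the normalization $N(\mu,0)=0$: if $\mathrm{Res}_{p_i}\omega=0$ for every $i$ then $\omega=df$ for a rational function $f$ on $\PP^1$, whose poles are exactly the $p_i$, so $\deg f=b_{[n]}-n=a+2-n$; but the single zero of order $a$ of $\omega$ makes $z$ a point of ramification index $a+1$ for $f$, contradicting $a+1>a+2-n$ since $n\ge 2$. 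Now if every partial-sum vanishing of $\rho$ holds for $\rho'$ then $\mathcal A(\rho)\subseteq\mathcal A(\rho')$, strictly if $\rho'$ satisfies a further vanishing $I$ (take the block-pair $\{I,I^c\}$), and in every case $N(\mu,\rho')-N(\mu,\rho)=\sum_{\mathcal J\in\mathcal A(\rho')\setminus\mathcal A(\rho)}w(\mathcal J)$. The monotonicity is exactly the assertion that this difference of filter-sums is strictly negative, and once it is established the equivalence $N(\mu,\rho)=0\iff\rho\equiv 0$ follows by taking $\rho'\equiv 0$, which gives $N(\mu,\rho)>N(\mu,0)=0$ whenever $\rho\not\equiv 0$.

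The strict negativity of $\sum_{\mathcal J\in\mathcal A(\rho')\setminus\mathcal A(\rho)}w(\mathcal J)$ is the main obstacle. My plan for it is to interpolate between $\rho$ and $\rho'$ by a chain of residue data each adjoining a single new partial-sum relation, and at each step to expand the extra contribution by inclusion--exclusion over the minimal partitions newly made admissible, reducing to evaluations of the principal sums $\sum_{\mathcal J\ge\mathcal P}w(\mathcal J)$ over filters $[\mathcal P,\hat 1]$; by the single-vanishing case of Theorem~\ref{thm:intro} and its iteration these are explicit alternating products of $f$-values, from which the sign should be extractable. Failing a purely combinatorial argument, the inequality is precisely what the isoresidual fibration predicts geometrically: as $\rho$ is specialized to a more special residue datum, at least one of the $N(\mu,\rho)$ differentials must leave the stratum and reappear in the boundary of the compactified moduli space, so the cardinality of the fiber strictly drops.
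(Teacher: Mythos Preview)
Your treatment of the polynomial degree is correct and takes a different route from the paper's. Where the paper bounds the degree by $n-2$ and then runs an induction on $n$ through the recursion \eqref{eqn:minusboundary} (each boundary term contributing, via \eqref{eqn:product}, a product of smaller $N$'s times a twist $t_\Gamma$ of the right total degree), you instead isolate a single explicit monomial $m^\ast$ in the top homogeneous part of the closed formula and show its coefficient is $(n-2)!/k!>0$. Your argument is self-contained and avoids the induction; the paper's has the advantage of reusing machinery already in place. Your proof that $N(\mu,\underline 0)=0$ via exact differentials is also fine and is essentially the argument the paper cites from \cite{BCGGM1}.

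The monotonicity, however, is not proved in your proposal. You reduce it correctly to the strict negativity of $\sum_{\mathcal J\in\mathcal A(\rho')\setminus\mathcal A(\rho)}w(\mathcal J)$, but the weights $w(\mathcal J)$ alternate in sign and your inclusion--exclusion ``plan'' is never carried out; the closing geometric remark is a heuristic, not an argument (nothing rules out a priori that fiber points collide or that boundary contributions cancel). The paper sidesteps the sign problem by staying one level above the closed formula: in \eqref{eqn:minusboundary} the passage from $\rho$ to a $\rho'$ with one more independent vanishing subtracts $\sum_{\Gamma\in G_k}t_\Gamma\int_{\cR_{k-1}}\delta_\Gamma\xi^{n-2-k}$, and by \eqref{eqn:product} each summand equals $t_\Gamma\prod_{i=0}^m N(\mu_i,\rho_i)$ --- a positive twist times a product of actual fiber cardinalities, hence automatically $\ge 0$. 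Strictness then reduces to exhibiting one $\Gamma$ with every factor positive; the graph with exactly two top components indexed by the new vanishing $I$ and by $I^c$ works, since $I$ is not generated by the vanishings of $\rho$ (so the bottom tuple $\rho_0$ is nonzero, giving $N(\mu_0,\rho_0)=1$) and neither restriction $\rho|_I$, $\rho|_{I^c}$ is identically zero (so by induction on $n$ the top factors are positive). The nonnegativity of each correction is thus structural in the recursion but invisible in the closed formula, and that is precisely the missing step in your argument.
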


Intuitively speaking, the above inequality says that as the residue tuple $\rho$ specializes to satisfy more partial sum vanishings, some fiber points of the isoresidual map approach to the boundary of the moduli space, so that they need to be removed from the count. 

The connection to intersection theory is that a partial sum vanishing condition of the residues corresponds to certain divisor class in the compactified moduli space of differentials. Then one expects that the intersection number of those divisor classes is equal to the number of differentials satisfying the given residue conditions. However, when the given residues are not general, there can be excess intersection in the boundary of the moduli space, which is the main technical difficulty that we will analyze and resolve. 
Moreover, despite that Theorem~\ref{thm:intro} (i) and (ii) are sub-cases of \eqref{eqn:main}, we will use them in our proof for arbitrary residues. Therefore, we will demonstrate the proof of Theorem~\ref{thm:intro} separately, which can also help the reader quickly get a feel about our method. 

The most special residue tuple is $\underline{0} = (0, \ldots, 0)$. Indeed it is known that a meromorphic differential on $\PP^1$ with a single zero and at least two poles cannot have zero residues at every pole (see~\cite[Lemma 3.6]{BCGGM1}), namely, $N(\mu, \underline{0}) = 0$. In this case our formula~\eqref{eqn:main} yields an interesting and nontrivial combinatorial identity, which we will prove directly in Lemma~\ref{lem:zero}. Given this fact, one can projectivize the isoresidual map by sending a differential modulo scaling to the projective tuple of its residues as a well-defined point in $\PP^{n-1}$, where the fiber cardinality over $[r_1:\cdots :r_n]$ still counts the same number $N(\mu, \rho)$ when regarding $\rho$ as the projective tuple $[r_1:\cdots :r_n]$. 

The second most special residue tuple is $(0,\ldots, 0, r, -r)$ where $r\neq 0$, for which the answer is also known and has a simple expression (see \cite[Proposition 3.1]{CC}, \cite[Proposition 4.6]{GT}, and \cite[Theorem 4.1]{BR}). For completeness we include a short proof in Proposition~\ref{prop:k=n-2} for this case, which also leads to an interesting combinatorial identity. 

In a different context, Sugiyama \cite{Su} studied this question from the viewpoint of polynomial maps on $\PP^1$ and related dynamics. 
Consider a polynomial $g(z)\in \mathbb{C}[z]$ with fixed points $p_1,\ldots, p_n$ each of which has multiplicity $b_i$, i.e., $p_i$ is a root of $g(z) = z$ with multiplicity $b_i$ where $b_1 + \cdots + b_n = \deg g$. The multiplier of $g$ at the fixed point $p_i$ is defined as 
$\lambda_i = g'(p_i)$, which determines the linear approximation of $g$ at $p_i$ and thus plays an important role in the dynamics of polynomial maps. 
 In order to relate to our setting, consider the differential $\omega = dz / (z - g(z))$. For $b_i = 1$, i.e., when $p_i$ is a simple fixed point of $g$ and a simple pole of $\omega$, the residue $r_i$ of $\omega$ at $p_i$ is $1/(1- \lambda_i)$. Therefore, determining polynomials $g$ with simple fixed points and given multipliers at each fixed point is equivalent to determining differentials $\omega$ with simple poles and given residues at each pole. For this special case of simple poles, i.e., $b_1 = \cdots = b_n = 1$, Sugiyama described an analogous recursive solution, while leaving the general case of higher multiplicities as a conjecture (see \cite[Main Theorem III and Conjecture 2]{Su}). Our result thus settles the problem of counting polynomials with fixed multipliers completely for all cases.  

Finally we look into some future directions. It would be interesting to find out a generating function of those intersection numbers appearing in our computation 
 in the spirit of Witten's conjecture (see \cite{BR, BRZ} for related discussions). Moreover, one can consider similar questions for meromorphic differentials with more zeros, or in higher genera, or $k$-differentials.  Note that in general by dimension count fixing the residues at each pole may yield a positive-dimensional locus of differentials satisfying the given residues. Nevertheless, one can impose other non-residue conditions, such as fixing the moduli of the underlying curves or fixing certain configurations of saddle connections (in the sense of \cite{EMZ}), which can help reduce to finitely many solutions. Additionally, residues are part of local period coordinates for moduli spaces of meromorphic differentials with prescribed zero and pole orders. Therefore, our work can shed light on generalizing Masur--Veech volumes of moduli spaces of holomorphic differentials to the case of meromorphic differentials and consequently computing those volumes via intersection theory (see \cite{CMSZ} for the holomorphic case). We plan to treat these questions in future work. 

This paper is organized as follows. In Section~\ref{sec:review} we review the compactification of moduli spaces of differentials and introduce various divisor classes. In Section~\ref{sec:no-psv} we explain how to identify the universal line bundle class with the divisor class of the locus of differentials satisfying a general given residue tuple and prove  Theorem \ref{thm:intro} (i). In Section~\ref{sec:one-psv} we impose exactly one independent partial sum vanishing condition to the residues and prove Theorem \ref{thm:intro} (ii). Finally in Section~\ref{sec:any-psv} we prove Theorem~\ref{thm:main} for arbitrary residues and investigate combinatorial properties of the solution formula. We have also verified our formula numerically for a number of cases by using the software package \cite{diffstrata}. 

\subsection*{Acknowledgements} We thank Quentin Gendron, C\'esar Lozano Huerta, Myeongjae Lee, Johannes Schmitt, and Guillaume Tahar
for inspiring discussions. We are grateful to Laura DeMarco for pointing out the reference \cite{Su}. 

\section{Divisor classes}
\label{sec:review}

We use $\mathcal{P}_n(\mu)$ to denote the incidence variety compactification of the projectivized moduli space of differentials of type $\mu$ (see \cite{BCGGM1} for more details). 
The space $\mathcal{P}_n(\mu)$  parameterizes tuples $(X, [\omega], z,p_1,\ldots,p_n)$ where $[\omega]$ (up to scalar) is a stable differential of type $\mu$ compatible with a level graph of the Deligne--Mumford pointed stable rational curve $(X, z, p_1,\ldots,p_n)$. To simplify the notation we will also denote by $(X,[\omega])$ or just by $[\omega]$ a point in $\mathcal{P}_n(\mu)$. 

We introduce several divisor classes of $\mathcal{P}_n(\mu)$ that will be used later. Let $\psi_p$ be the psi-class associated to a marked point $p$. It is the divisor class of the bundle of the cotangent lines at $p$. In the case of genus zero the intersection of these psi-classes satisfies the formula
\begin{equation}
\label{eqn:psi}
    \int_{\overline{M}_{0,n}} \psi_{p_1}^{k_1} \cdots \psi_{p_n}^{k_n} = \binom{n-3}{k_1,\ldots, k_n}
\end{equation}
whenever $k_1 + \cdots + k_n=n-3$, as shown by Witten in \cite{W}. Next we define the universal line bundle class 
\begin{equation*}
 \xi =c_1(\mathcal{O}_{\mathcal{P}_n(\mu)}(1))
\end{equation*}
which is the dual of the tautological bundle $\mathcal{O}_{\mathcal{P}_n(\mu)}(-1)$ whose fiber over $[\omega]$ is spanned by $\omega$. We also need the boundary divisors $\delta_{\Gamma}$ whose general points parameterize stable pointed differentials compatible with a two-level graph $\Gamma$. Denote by $\Gamma_{\bot}$ the bottom level of $\Gamma$. Since every bottom component must contain a zero and 
 in our case there is a unique zero, the bottom level $\Gamma_{\bot}$ thus consists of a single component $X_0$ containing the marked zero $z$. Each irreducible top level component $X_i$ intersects $X_0$ at a single node $q_i$. Denote by $a_i={\rm ord}_{q_i}\omega|_{X_i}$ the zero order at 
 $q_i$ when restricting $\omega$ to $X_i$, denote by $t_i = a_i+1$ the twist of $\omega$ at $q_i$ (also called prong in \cite{BCGGM3}), and define 
 \begin{equation}
 \label{eqn:twist}
 t_{\Gamma} = \prod_{i=1}^m t_i
 \end{equation}
 to be the total twist of the graph $\Gamma$. These divisor classes satisfy the relation
\begin{equation}
\label{eqn:etamain}
    \xi = (b_i-1)\psi_{p_i} + \sum_{\Gamma_{\bot} \ni p_i} t_{\Gamma} \delta_{\Gamma}
\end{equation}
for each $i$ (see e.g., \cite[Theorem 6]{Sa}) where the sum runs over all two-level graphs whose bottom components contain the specified marked pole $p_i$. Note that (before twist) $\omega$ is identically zero on the bottom component $X_0$, and hence ${\rm Res}_{p_j}\omega=0$ for all $p_j\in X_0$.

We remark that there is another compactification finer than the incidence variety compactification, which is the multi-scale moduli space constructed in \cite{BCGGM3}. However, all divisor class relations and intersection calculations we will use hold identically on the multi-scale compactification. Therefore, we choose to work with the incidence variety compactification for the sake of simplifying the exposition.  

\section{General residues}
\label{sec:no-psv}

In this section we will compute the number $N(\mu,\rho)$ for the case (i) of Theorem \ref{thm:intro} when the given residues in $\rho=(r_1,\ldots,r_n)$ are general, i.e., when $\rho$ does not satisfy any partial sum vanishings. 

\begin{proof}[Proof of Theorem~\ref{thm:intro} {\rm (i)}]
Consider the following section of $\mathcal{O}_{\mathcal{P}_n(\mu)}(1)$ given by 
\begin{align*}
    \phi_i \colon\OO_{\mathcal{P}_{n}(\mu)}(-1) & \to  \CC\\
      (C, \omega, z, p_1,\ldots,p_n) & \mapsto r_n {\rm Res}_{p_i}\omega - r_i {\rm Res}_{p_n}\omega
\end{align*}
for $1\leq i \leq n-1$. The vanishing locus of $\phi_i$ is a divisor $D_i$ that parameterizes tuples $(X,[\omega],p_1,\ldots, p_n)$ such that 
$[{\rm Res}_{p_i}\omega : {\rm Res}_{p_n}\omega]=[r_i : r_n]$. Since $r_1+\cdots+r_n=0$, the intersection $\cap_{i=1}^{n-2} D_i$ in the interior of $\mathcal{P}_n(\mu)$ parameterizes those differentials $\omega$ whose residues are a scalar multiple of $\rho$. This intersection in the interior is transversal as the linear forms defining the sections $\phi_i$ are independent and the residues (modulo the total sum) are local coordinates of the interior of the moduli space. Moreover, no boundary point $(X, [\omega])$ lies in the intersection $\cap_{i=1}^{n-2} D_i$. Otherwise if $X$ is nodal, the poles contained in each irreducible component of $X$ lead to a partial sum vanishing by the Residue Theorem, contradicting the generality of $\rho$. Therefore, the intersection $\cap_{i=1}^{n-2} D_i$ consists of reduced points contained in the interior of $\mathcal{P}_n(\mu)$. Since the divisor class of $D_i$ is $c_1(\OO_{\mathcal{P}_n(\mu)}(1))=\xi$, we thus conclude that
\begin{equation*}
    N(\mu,\rho)=\int_{\mathcal{P}_n(\mu)} D_1\cdots D_{n-2}=\int_{\mathcal{P}_n(\mu)} \xi^{n-2}. 
\end{equation*}
It remains to show that the above intersection number is equal to $f(a,n)=\frac{a!}{(a-(n-2))!}$ as defined in \eqref{eqn:s}. 

We proceed by induction on $n$. For $n=2$, the space $\mathcal{P}_2(\mu)$ is a single point and our claim holds obviously. 
For $n\geq 3$, by using relation \eqref{eqn:etamain} with respect to the pole $p_1$, we have 
\begin{equation*}
    \xi^{n-2}=(b_1-1)\xi^{n-3}\psi_{p_1}+ \sum_{\Gamma_{\bot} \ni p_1}t_{\Gamma} \delta_{\Gamma}\xi^{n-3}.
\end{equation*}
Whenever we have a product $\xi^{n-2-k}\psi_{p_1}^k$, we can continue expanding a factor $\xi$ by using \eqref{eqn:etamain} with respect to the pole $p_1$. Therefore, we end up with the expression
\begin{equation}
\label{eqn:etaexpand}
     \xi^{n-2} =  \sum_{k=0}^{n-2} (b_1-1)^k \sum_{\Gamma_{\bot} \ni p_1}t_{\Gamma} \delta_{\Gamma}\xi^{n-3-k}\psi_{p_1}^k.
\end{equation}

We now compute the products $\delta_{\Gamma}\xi^{n-3-k}\psi_{p_1}^k$. Suppose the two-level graph $\Gamma$ has $m$ top components $X_1,\ldots,X_m$ and one bottom component $X_0$ where each component contains $n_i$ marked poles for $i=0,\ldots,m$. Then the corresponding twisted differential on $X_0$ has a single zero at $z$, $n_0$ marked poles including $p_1$, and $m$ additional poles at the nodes $q_i = X_i \cap X_0$ for $i=1,\ldots, m$. Denote by $\mu_i$ the tuple of orders of the zero (at the node $q_i$) and marked poles of $\omega|_{X_i}$ for $i=1,\ldots, m$. Denote the tautological bundle of $\mathcal{P}_{n_i}(\mu_i)$ by $\OO_i(-1)$. Consider the product space
\begin{equation*}
    \mathcal{P} = \mathcal{P}_{n_1}(\mu_1)\times \cdots \times \mathcal{P}_{n_m}(\mu_m).
\end{equation*}
There exist the projection maps $\pi_i\colon \mathcal{P} \to \mathcal{P}_{n_i}(\mu_i)$ and the vector bundle 
\begin{equation}
\label{eqn:E}
\mathcal{E}=\pi_1^{*}\OO_1(-1)\oplus \cdots \oplus \pi_m^{*}\OO_m(-1) 
\end{equation}
on $\mathcal P$. Note that $\delta_{\Gamma}$ can be identified with $\PP\mathcal{E}\times \M_{0,n_0+m+1}$ where $\PP\mathcal{E}$ parameterizes differentials on the top components and $\M_{0,n_0+m+1}$ parameterizes the marked points and nodes in the bottom component. In particular, $\psi_{p_1}$ is trivial on the top moduli space $\PP\mathcal{E}$ and $\xi|_{\delta_\Gamma}$ as the pullback of $c_1(\OO_{\PP\mathcal{E}}(1))$ is trivial on the bottom moduli space $\M_{0,n_0+m+1}$. Therefore, $\delta_{\Gamma}\xi^{n-3-k}\psi_{p_1}^k$ is nonzero only if 
$$k = \dim \M_{0,n_0+m+1} = n_0 + m -2. $$

Denote by $\xi_i =c_1(\OO_i(1))$ for $i=1,\ldots,m$ and for simplicity we still use $\xi_i$ for the pullback class via $\pi_i$. The Segre class of $\mathcal{E}$ can be computed as 
\begin{align}
\label{eqn:segre}
    s(\mathcal{E})  &=\frac{1}{\prod_{i=1}^m (1-\xi_i)} \\ \nonumber
                  &=\prod_{i=1}^m (1+\xi_i+\cdots+\xi_i^{d_i})
\end{align}
where $d_i = n_i - 2 = \dim \mathcal{P}_{n_i}(\mu_i)$. Therefore, we obtain that 
\begin{align*}
    \int_{\mathcal{P}_{n}(\mu)} \delta_{\Gamma}\xi^{n-n_0-m-1}\psi_{p_1}^{n_0+m-2} 
    &=\int_{\PP\mathcal{E}}\xi^{n-n_0-m-1} \int_{\M_{0,n_0+m+1}} \psi_{p_1}^{n_0+m-2} \\
    &=\int_{\PP\mathcal{E}}\xi^{n-n_0-m-1} \quad {\rm by}\ \eqref{eqn:psi} \\
    &=\int_{\mathcal{P}}s_{n-n_0-2m}(\mathcal{E}) \\
    &=\int_{\mathcal{P}} \xi_1^{d_1}\cdots \xi_m^{d_m} \\
    &=\prod_{i=1}^m \int_{\mathcal{P}_{n_i}(\mu_i)} \xi_i^{d_i} \\
    &=\prod_{i=1}^m f(a_i,n_i) 
\end{align*}
where $a_i={\rm ord}_{q_i}\omega|_{X_i}$ is the zero order at the node $q_i=X_i\cap X_0$ for $i=1,\ldots,m$ and the last equality follows from induction. 

Next recall that 
$$t_{\Gamma}=\prod_{i=1}^m (a_i+1).$$ 
Hence we have the equality 
\begin{align}
    t_{\Gamma}\int_{\mathcal{P}_{n}(\mu)} \delta_{\Gamma}\xi^{n-n_0-m-1}\psi_{p_1}^{n_0 + m -2}&= \prod_{i=1}^m  (a_i+1) f(a_i,n_i) \nonumber \\
    &=\prod_{i=1}^m f(a_i+1,n_i+1)\label{eqn:deltaetapsi}
\end{align}   
which is a polynomial of degree 
$$\sum_{i=1}^m (n_i-1)=n-n_0-m = n-k-2$$ 
in $\mathbb{Q}[b_1,\ldots,b_n]$ by using the relation $a_i=\big(\sum_{p_j\in X_i}b_j \big) -2$ and $k = n_0+m-2$. 

Consider the polynomial
\begin{align*}
  P(b_1,\ldots,b_n) &= f(a,n)- \int_{\mathcal{P}_n(\mu)} \xi^{n-2} \\
    &=f(a,n) - \Bigg(\sum_{k=0}^{n-2}(b_1-1)^k \sum_{\Gamma_{\bot} \ni p_1}t_{\Gamma} \int_{\mathcal{P}_n(\mu)} \delta_{\Gamma}\xi^{n-3-k}\psi_{p_1}^k \Bigg) \quad {\rm by} \ \eqref{eqn:etaexpand}.
\end{align*}
It suffices to show that $P$ is identically zero. First observe that $\deg P\leq n-2$. Moreover for $b_1=1$, we have 
\begin{align*}
        P(1,b_2,\cdots,b_n)&=f(a,n)-\Bigg( \sum_{\Gamma_{\bot} \ni p_1}t_{\Gamma} \int_{\mathcal{P}_n(\mu)}\delta_{\Gamma}\xi^{n-3}\Bigg) \\
        &=f(a,n)-f(a_1+1,n_1+1) \\
        &=0 
\end{align*}
where the last two equalities follow from \eqref{eqn:deltaetapsi} for the case $k=n_0+m-2=0$ and hence $n_0=1$, $m=1$, $n_1=n-1$, and $a_1+1=\big(\sum_{i=2}^n b_i \big)-1=a$. We  thus conclude that $b_1-1$ divides $P(b_1,\ldots, b_n).$ By symmetry $\prod_{i=1}^n (b_i-1)$ divides $P(b_1,\ldots,b_n)$. Since $\deg P \leq n-2$, we must have $P \equiv 0$, and hence $ \int_{\mathcal{P}_{n}(\mu)}\xi^{n-2} = f(a,n).$
\end{proof}
 
The results and ideas in the above proof will also help us treat other previously unknown cases of special residue conditions.

\section{One partial sum vanishing}
\label{sec:one-psv}

In this section we study the case of a given residue tuple $\rho = (r_1,\ldots, r_n)$ with a unique pair of complementary proper 
 subsets $I\sqcup I^c = \{1, \ldots, n \}$ such that $\sum_{j\in I} r_j=0$ as in Theorem \ref{thm:intro} (ii). Although this is a special case of our most general formula \eqref{eqn:main}, we demonstrate it separately to motivate the strategy.  

\begin{proof}[Proof of Theorem~\ref{thm:intro} {\rm (ii)}]
Consider the following section of $\OO_{\mathcal{P}_{n}(\mu)}(1)$ given by 
\begin{align*}
    \phi_I\colon  \OO_{\mathcal{P}_{n}(\mu)}(-1) & \to  \CC\\
      (C,\omega,z,p_1,\ldots,p_n) &  \mapsto \sum_{i\in I} {\rm Res}_{p_i}\omega. 
\end{align*}
The vanishing locus of $\phi_I$ in $\mathcal{P}_{n}(\mu)$ has divisor class $\xi$ and contains those boundary divisors $\delta_{\Gamma}$ with multiplicity $t_{\Gamma}$ such that every top component has marked poles either all in $I$ or all in $I^c$. We denote this set of boundary divisors by $G_I$. Let $\cR_I$ be the closure of the interior vanishing locus. Then the divisor class of $\cR_I$ can be expressed as 
\begin{equation}
\label{eqn:RI}
    \cR_I =\xi-\sum_{\Gamma \in G_I} t_{\Gamma} \delta_{\Gamma} 
\end{equation}
(see \cite[Proposition 8.3]{CMZ}). 

First consider the case $2 \leq |I| \leq n-2$.  Assume without loss of generality that $n-2\in I$. Take the divisors $D_1,\ldots,D_{n-3}$ as defined in Section~\ref{sec:no-psv}. If $D_1\cap\cdots \cap D_{n-3}\cap \cR_I$ contains a boundary point, then the poles indexed by $I$ and $I^c$ must be the marked poles contained in two top level components $X_1$ and $X_2$, respectively, for otherwise $\rho$ would satisfy an extra partial sum vanishing. However, the twisted differential on the bottom component $X_0$ with the single zero $z$ would have zero residues at the nodes $q_i = X_i \cap X_0$ for $i = 1,2$ by using the global residue condition in \cite{BCGGM1}, but such a differential does not exist by \cite[Lemma 3.6]{BCGGM1}. We thus conclude that $D_1\cap\cdots \cap D_{n-3}\cap \cR_I$ consists of interior points only, which implies that 
\begin{align*}
   N(\mu,\rho)&=\int_{\mathcal{P}_{n}(\mu)} D_1\cdots D_{n-3}\cR_I \\
              &=\int_{\mathcal{P}_{n}(\mu)} \xi^{n-3}\left(\xi-\sum_{\Gamma \in G_I} t_{\Gamma}\delta_{\Gamma}\right) \quad {\rm by}\ \eqref{eqn:RI}\\
              &=\int_{\mathcal{P}_{n}(\mu)} \xi^{n-2}- \sum_{\Gamma \in G_I} t_{\Gamma}\int_{\mathcal{P}_{n}(\mu)}\xi^{n-3}\delta_{\Gamma}. 
\end{align*}
In Section~\ref{sec:no-psv} we showed that $\int_{\mathcal{P}_{n}(\mu)} \xi^{n-2}=f(a,n)$ and that $\xi^{n-3}\delta_{\Gamma}$ is nonzero only if $m+n_0-2=k=0$, that is, when $m=2$ and $n_0=0$. In this case the bottom component $X_0$ of $\Gamma$ contains $z$ and the nodes $q_1, q_2$.  
By using \eqref{eqn:deltaetapsi} we have 
\begin{equation}
    t_{\Gamma}\int_{\mathcal{P}_{n}(\mu)} \delta_{\Gamma}\xi^{n-3}=f(a_1+1,|I|+1) \cdot f(a_2+1,n-|I|+1)
\end{equation}
where $a_1=b_I-2$ and $a_2=b_{I^c}-2=a- b_I$. It follows that
\begin{align*}
    N(\mu,\rho) &=f(a,n)-f(b_I-1,|I|+1)\cdot f(a- b_I +1,n-|I|+1). 
\end{align*}

The remaining case when $|I| = 1$ or $|I^c| = 1$ can be treated similarly. Suppose $I = \{ 1, \ldots, n-1\}$ and then the boundary correction term in the above is from the graph $\Gamma$ where the top component $X_1$ contains $p_1,\ldots, p_{n-1}$, the bottom component $X_0$ contains $z, p_n$, the node $q_1 = X_1 \cap X_0$, and the twist $t_\Gamma = a - b_n + 1$. Since $p_n$ is in the bottom, the stable differential $\omega$ (before twist) automatically has zero residue at $p_n$ as $\omega$ is identically zero on $X_0$. 
Note that $f(b_n - 1, 2) = 1$ and the bottom component of $\Gamma$ has unique moduli. In this case we conclude that 
 \begin{align*}
  N(\mu,\rho) &=  f(a,n) - t_\Gamma f(a-b_n, n-1) \\
   &=  f(a, n) - f(b_n-1, 2) \cdot f(a-b_n+1, n)
  \end{align*}
  as desired. 
  \end{proof}
  
  \begin{remark}
  \label{rem:semistable}
  In the last case when $I = \{ 1,\ldots, n-1\}$ and $I^c = \{n\}$, to make a uniform expression as before, i.e., to have two top components containing the marked poles indexed by $I$ and $I^c$ respectively for the boundary correction term, we can bubble up $p_n$ from the bottom component $X_0$ of $\Gamma$ to form a semistable top component $X_2$. Denote by $\Gamma'$ the resulting graph that has an additional node $q_2 = X_2 \cap X_0$ playing the role of $p_1$ in $X_0$. Then $t_{\Gamma'} = (b_n-1) t_\Gamma$ where $b_n-1$ is the twist at the new node $q_2$. Note that by definition 
  $f(b_n-2, 1) = 1/(b_n-1)$. Therefore, in this case we have 
      \begin{align*}
 t_{\Gamma'} f(b_n-2, 1)\cdot f(a-b_n, n-1) = t_\Gamma f(a-b_n, n-1). 
  \end{align*}
  In other words, even though the factor 
$ f(b_n-2, 1)$ counts the semistable component and is fractional, it cancels out with the twisting factor gained from the semistable node and hence does not affect the answer. The advantage of this semistable version is that the expression of the boundary correction term remains symmetric with respect to $I\sqcup I^c = \{1,\ldots, n\}$ and all marked poles can be regarded as in top level components. In what follows we will adapt this viewpoint to simplify the combinatorial description and notation in our proof. We remark that the bottom component $X_0$ remains stable, as any marked pole bubbled up is replaced by a node in $X_0$ and hence the total number of special points in $X_0$ does not change. 
\end{remark}

\section{Multiple partial sum vanishings}
\label{sec:any-psv}

In this section we study the case of an arbitrary residue tuple $\rho = (r_1,\ldots, r_n)$ and prove the formula~\eqref{eqn:main} in Theorem \ref{thm:main}. Since the Residue Theorem requires the total sum of residues to be zero, we assume that the given residue tuple $\rho$ always satisfies the total sum vanishing $\sum_{i=1}^n r_i = 0$ as a prerequisite. 

First we introduce some terminologies that can characterize the partial sum vanishings satisfied by $\rho$. For a subset $I$ of $\{1,\ldots, n\}$, denote by $r_I = \sum_{j\in I} r_j$ the linear form recording the sum of assigned residues indexed by $I$. Let $\mathcal{I}=\{I_i\}_{i=1}^k$ be a collection of nonempty proper subsets such that each partial sum vanishing $r_{I_i}$ for $I_i\in \cI$ is satisfied by $\rho$. We say that $\mathcal{I}$ is complete with respect to $\rho$ if every partial sum vanishing $r_J = 0$ satisfied by $\rho$ can be generated by those in $\mathcal I$ together with the total sum $r_1 + \cdots + r_n$, i.e., $r_J$ can be generated by those $r_{I_i}$ for $I_i \in \mathcal I$ modulo the total sum. We say that $\mathcal{I}$ is independent if $r_{I_1}, \ldots, r_{I_k}$ are independent linear forms modulo the total sum.

From now on we take a complete and independent collection $\mathcal{I}=\{I_1,\ldots,I_k\}$ with respect to the given residue tuple $\rho$. Consider the sequence $$\emptyset = \mathcal{I}_0 \subset \mathcal{I}_1\subset \cdots \subset \mathcal{I}_k=\mathcal{I}$$
where $\mathcal{I}_i=\{I_1,\ldots,I_i\}$. Define the subspace $\cR_i\subset \mathcal{P}_n(\mu)$ as the closure of the locus of differentials in the interior of the moduli space that satisfy the partial sum vanishings generated by $\mathcal{I}_i$. Then we have 
$$ \mathcal{P}_n(\mu) = \cR_0 \supset \cR_1 \supset \cdots  \supset \cR_k $$
where each $\cR_j$ has codimension $j$ in $\mathcal{P}_n(\mu)$. As the $k$ partial sum vanishings in $\mathcal{I}$ are independent, the hyperplanes defined by them intersect transversally. Therefore, there exist additionally $n-2-k$ general and transversal hyperplanes, together with $r_{I_i}$ for $i=1, \ldots, k$, that cut out the given point $\rho = [r_1:\cdots:r_n]$ as a projective tuple (and see Lemma~\ref{lem:zero} for the exceptional case when $\rho$ is identically zero). Let $D_1,\ldots,D_{n-2-k}$ be the divisors in $\mathcal{P}_n(\mu)$ satisfying these additional general residue hyperplanes respectively where the class of $D_i$ is $\xi$ as seen before.  

We claim that the intersection $D_1\cap \cdots \cap D_{n-2-k} \cap \cR_k$ consists of interior points of $\mathcal{P}_n(\mu)$ only. Otherwise, suppose a boundary point $(X, [\omega])$ compatible with a level graph $\Gamma$ lies in this intersection locus. To simplify the description and make the notation more symmetric, as explained in Remark~\ref{rem:semistable}
 we use a semistable model of the underlying curve $X$ by bubbling up any marked pole $p_i$ in a lower level component of $\Gamma$ to form a new top level semistable component that contains $p_i$ and meets the original component at a node, where $\omega$ restricted to the new component has a single pole at $p_i$ and a single zero at the node. Since the residue of $\omega$ at $p_i$ remains zero before and after, this change of viewpoints does not affect any argument of applying the Residue Theorem or the global residue condition to our situation. Now with the $n$ marked poles all on top level, suppose $\Gamma$ has $m$ top components $X_1,\ldots, X_m$ and denote by $I_{X_i}$ the set of indices of the marked poles contained in each $X_i$. Then the sum of the residues of the marked poles in each $X_i$ is zero, which defines a partial sum vanishing indexed by $I_{X_i}$ and satisfied by $\rho$. Then by the (generalized) global residue condition on $\cR_k$ (see \cite[Section 4.1]{CMZ}), the residue of the twisted differential associated to $\omega$ at every node of the bottom component of $\Gamma$ must be zero. However since the bottom twisted differential has a single zero at $z$, 
 its residues cannot be all zero (see \cite[Lemma 3.6]{BCGGM1}), which leads to a contradiction. Therefore, we conclude that 
 
 \begin{equation}
 \label{eqn:N-any}
    N(\mu,\rho)=\int_{\mathcal{P}_n(\mu)} \cR_k D_1\cdots D_{n-2-k}=\int_{\cR_k}\xi^{n-2-k}.
\end{equation}

Now to prove Theorem~\ref{thm:main} it suffices to prove the following result. 

\begin{prop}
\label{prop:any}
In the above setting we have 
\begin{equation} 
\label{eqn:generalformula}
    \int_{\cR_k}\xi^{n-2-k} = \sum_{s=1}^S (-1)^{s-1}(a+1)^{s-2}\sum_{\mathcal{J}_s\in \mathcal{C}_s} \prod_{J_j\in \mathcal{J}_s} f_{J_j}
\end{equation}
where $\mathcal{C}_s$ parameterizes collections of $s$ disjoint partial sum vanishings of $\rho$ whose union contains all the $n$ poles, $S$ is the maximum value of such $s$, and $f_{J_j}=f(b_{J_j}-1,|J_j|+1)$.
\end{prop}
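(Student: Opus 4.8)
The plan is to prove \eqref{eqn:generalformula} by induction on $k$, the number of independent partial sum vanishings in the complete collection $\mathcal I$, using the divisor relation \eqref{eqn:etamain} and the structure of the boundary divisors on which the class $\cR_k$ concentrates excess intersection. The base case $k=0$ is exactly Theorem~\ref{thm:intro}~(i), giving $\int_{\cR_0}\xi^{n-2}=f(a,n)$, which is the $s=1$ term (the unique partition into one block). For the inductive step I would use the analogue of \eqref{eqn:RI}: writing $\cR_k$ as the closure of the interior vanishing locus of the section $\phi_{I_k}\colon \OO_{\mathcal P_n(\mu)}(-1)\to\CC$ given by $\omega\mapsto \sum_{i\in I_k}\mathrm{Res}_{p_i}\omega$ restricted to $\cR_{k-1}$, we get
\[
  \cR_k = \xi|_{\cR_{k-1}} - \sum_{\Gamma\in G_{I_k}} t_\Gamma\,\delta_\Gamma|_{\cR_{k-1}},
\]
where $G_{I_k}$ collects the two-level graphs whose top components each carry marked poles all in $I_k$ or all in $I_k^c$. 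Multiplying by $\xi^{n-2-k}$ and integrating over $\cR_{k-1}$ then yields $\int_{\cR_k}\xi^{n-2-k} = \int_{\cR_{k-1}}\xi^{n-1-k} - \sum_{\Gamma\in G_{I_k}} t_\Gamma \int_{\cR_{k-1}}\delta_\Gamma\,\xi^{n-2-k}$.

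The first term $\int_{\cR_{k-1}}\xi^{n-1-k}$ is computed by the inductive hypothesis: note that $\int_{\cR_{k-1}}\xi^{(n-1)-2-(k-1)}$ is the formula \eqref{eqn:generalformula} for the tuple with one block of $I_k$-indices merged into a single ``pole'' carrying the summed order, so one has to track how the partitions of the smaller index set lift to partitions of $\{1,\dots,n\}$ refining the coarse partition $\{I_k\}$. For the boundary terms $\int_{\cR_{k-1}}\delta_\Gamma\,\xi^{n-2-k}$, I would adopt the semistable model of Remark~\ref{rem:semistable} so that all $n$ marked poles sit on top level; then $\delta_\Gamma$ splits as a product $\PP\mathcal E\times\M_{0,n_0+m+1}$, and the vanishing-dimension bookkeeping (as in Section~\ref{sec:no-psv}: $\psi_{p}$ is trivial on $\PP\mathcal E$, $\xi|_{\delta_\Gamma}$ is trivial on the bottom) forces the bottom moduli factor to be zero-dimensional, i.e.\ $n_0+m-2=0$. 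Combining the Segre class computation \eqref{eqn:segre} with the twist factor $t_\Gamma=\prod(a_i+1)$ and \eqref{eqn:deltaetapsi}, each such $\Gamma$ contributes a product of $f(a_i+1,n_i+1)=f_{I_{X_i}}$ over the top components, recursively evaluated on $\cR_{k-1}$ restricted to each top piece. The power of $a+1$ then emerges from how the factor $(a+1)^{s-2}$ behaves under splitting off one block: merging or splitting a block changes $s$ by one and the total-order parameter governing the bottom component is $a+1$ (since the bottom twisted differential with one zero of order $a$ has $a+1$ prongs), and these factors multiply.

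The main obstacle I anticipate is the combinatorial reorganization: showing that the alternating sum produced by the recursion, after substituting the inductive formula into both $\int_{\cR_{k-1}}\xi^{n-1-k}$ and the boundary terms, collapses exactly to the sum over $\mathcal C_s$ of partitions into disjoint partial-sum-vanishing blocks with the sign $(-1)^{s-1}$ and weight $(a+1)^{s-2}$. Concretely, one must verify that a partition counted in $\mathcal C_s$ arises in the recursion with the correct total multiplicity after the inclusion--exclusion over which block of the partition is ``split off'' at step $k$, and that the powers of $a+1$ bookkeep correctly — the delicate point being that the exponent is $s-2$, not $s-1$ or $s$, so the $\xi$-power raised on $\cR_{k-1}$ (which naively would give one extra $a+1$) must exactly compensate. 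I would handle this by setting up the right generating-function or formal bookkeeping for partitions refining a fixed coarse partition, reducing the identity to a clean polynomial identity in $a$ (and the $b_{J}$'s), and then verifying it the way Theorem~\ref{thm:intro}~(i) was verified: bounding degrees and checking enough special values (e.g.\ specializing the $b_i$ or using that both sides vanish appropriately), so that no term-by-term grind is needed.
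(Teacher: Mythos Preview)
Your high–level architecture (induction on $k$, express $\cR_k$ as $\xi$ minus boundary on $\cR_{k-1}$, then reorganize) matches the paper, but two concrete steps in your outline would fail and a third is seriously underspecified.

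\textbf{Wrong boundary set $G_{I_k}$.} You define $G_{I_k}$ as the two-level graphs whose top components carry poles \emph{all in $I_k$ or all in $I_k^c$}. That is the correct description only when $k=1$ (Section~\ref{sec:one-psv}). For $k\ge 2$ the section $\phi_{I_k}$ can vanish identically on $\delta_\Gamma\cap\cR_{k-1}$ even when some top component $X_i$ mixes poles from $I_k$ and $I_k^c$, because the partial sum vanishings already imposed by $\mathcal I_{k-1}$ can combine with the residue-theorem vanishings $r_{I_{X_i}}=0$ to force $r_{I_k}=0$. The correct set $G_k$ (paper's notation) consists of all graphs for which $r_{I_k}$ lies in the span of $\{r_{I_j}:j<k\}\cup\{r_{I_{X_i}}\}$ modulo the total sum. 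Your recursion would miss boundary terms.

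\textbf{Bottom contribution is not just a point.} You argue by analogy with Section~\ref{sec:no-psv} that ``the bottom moduli factor is zero-dimensional, i.e.\ $n_0+m-2=0$'', hence $m=2$ in the semistable model. That analogy breaks: in Section~\ref{sec:no-psv} the bottom dimension is eaten by $\psi_{p_1}^k$, but here it is eaten by the \emph{global residue condition} inherited from $\cR_{k-1}$. The bottom factor is the locus of twisted differentials on $X_0$ with poles at $q_1,\dots,q_m$ and residues constrained by the GRC; its cardinality is a genuine $N(\mu_0,\rho_0)$ that must itself be expanded by the inductive formula (and can be $>1$). Graphs with any $m\ge 2$ contribute. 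Correspondingly, the product over top components is not just $\prod f_{I_{X_i}}$ but $\prod N(\mu_i,\rho_i)$, again expanded inductively.

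\textbf{The combinatorial collapse.} Your plan to verify the $(a+1)^{s-2}$ bookkeeping ``by bounding degrees and checking enough special values'' is the right instinct, but you have not identified the variables or the specializations. The paper does this by fixing a target partition $\mathcal J_s=\{J_1,\dots,J_s\}$, collecting all contributions to its coefficient into a polynomial $P_{\mathcal J_s}(b_{J_1},\dots,b_{J_s})$ of degree $\le s-2$, and running a \emph{second} induction, on $s$. The key specialization is $b_{J_1}=0$: one checks by a three-case analysis (depending on whether $J_1\subsetneq I_{X_1}$, or $J_1=I_{X_1}$ with $|J_1^0|>1$, or $J_1=I_{X_1}$ with $|J_1^0|=1$) that the difference $R_{\mathcal J_s}=P_{\mathcal J_s}-(-1)^{s-1}(a+1)^{s-2}$ vanishes at $b_{J_1}=0$, hence is divisible by $b_{J_1}\cdots b_{J_s}$, hence identically zero. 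An essential ingredient you do not mention is Lemma~\ref{lem:zero} (the identity $P(\mu,\underline 0)=0$), which is needed to discard the graphs $\Gamma\in G_{\mathcal J_s}\setminus G_k$ whose bottom residue tuple $\rho_0$ is forced to be $\underline 0$. Without that lemma the sum over $G_k$ cannot be rewritten as a sum over the combinatorially cleaner set $G_{\mathcal J_s}$, and the divisibility argument does not go through.

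Finally, your description of $\int_{\cR_{k-1}}\xi^{n-1-k}$ as ``the formula for the tuple with one block of $I_k$-indices merged into a single pole'' is a misreading: it is simply the inductive value $N(\mu,\rho_{k-1})$ for a residue tuple with one fewer independent partial sum vanishing, on the same $n$ poles, with no merging.
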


Recall that if we require every residue to be zero, i.e., if the given residue tuple $\rho$ is $\underline{0} = (0,\ldots,0)$, such a differential does not exist, and hence $N(\mu, \underline{0}) = 0$. As a preparation step we first prove that the right-hand side of \eqref{eqn:generalformula} is zero for the case of $\rho = \underline{0}$, which also yields an interesting combinatorial identity.  

\begin{lem}
\label{lem:zero} Denote by 
$$P(\mu, \underline{0}) = \sum_{s=1}^n (-1)^{s-1}(a+1)^{s-2}\sum_{J_1\sqcup \cdots \sqcup J_s = \{1,\ldots, n\}} \prod_{j=1}^s f_{J_j}.$$
Then $P(\mu, \underline{0})$ is identically zero.  
\end{lem}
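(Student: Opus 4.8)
The plan is to prove the identity $P(\mu,\underline 0)\equiv 0$ as a polynomial identity in $\mathbb{Q}[b_1,\ldots,b_n]$, where we regard $a+1 = b_1+\cdots+b_n-1$ as a linear form in the $b_i$. Since in the case $\rho=\underline 0$ every subset indexes a partial sum vanishing, the inner sum runs over \emph{all} ordered-into-unordered partitions $J_1\sqcup\cdots\sqcup J_s=\{1,\ldots,n\}$ into $s$ nonempty blocks, with no residue constraint, so $P(\mu,\underline 0)$ is a purely combinatorial expression built from the numbers $f_{J} = f(b_J-1,|J|+1) = (b_J-1)!/(b_J-|J|)!$. The first step is to recognize that $f_J$ has a clean form: writing $|J|=j$ and $b_J=c$, we have $f_J = (c-1)(c-2)\cdots(c-j) = \prod_{i=1}^{j}(c-i)$, a falling factorial of length $|J|$ starting from $b_J-1$.

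The cleanest route I would take is \emph{exponential generating functions / the exponential formula}. One introduces a bookkeeping variable to track block sizes and translates the sum over set partitions into the exponential of the ``connected'' generating function whose coefficients encode a single block $J$ contributing $f_J$. Concretely, for a single block of size $j$ one wants the ``weight'' $\prod_{i=1}^{j}(b_J-i)$; the subtlety is that this weight depends on $b_J=\sum_{i\in J}b_i$, not just on the structure of $J$, so I would first fix the \emph{total} $a = b_{\{1,\ldots,n\}}$ and observe that $f_J$ depends only on $b_J$ and $|J|$. An efficient device: use the integral/Beta-type or the binomial expansion $\prod_{i=1}^j(b_J-i) = \sum_{\ell}(\text{Stirling-type coefficients})\, b_J^{\ell}$ is messy, so instead I would pass to the substitution making falling factorials into powers. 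Specifically, the generating function $\sum_{j\ge 1}\prod_{i=1}^j(x-i)\,\frac{t^j}{j!}$ can be summed in closed form (it is essentially $(1+t)^{x-1}-1$ up to reindexing, since $\sum_{j\ge 0}\binom{x-1}{j}t^j = (1+t)^{x-1}$ and $\binom{x-1}{j}j! = \prod_{i=1}^j(x-i)$). This is the key algebraic simplification: \emph{the single-block weight $f_J$ is exactly $|J|!\,\binom{b_J-1}{|J|}$}, i.e. the coefficient extraction from $(1+t)^{b_J-1}$.

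With that in hand, the plan is: (1) apply the exponential formula, keeping the sign $(-1)^{s-1}$ and the factor $(a+1)^{s-2}$. The factor $(a+1)^{s-2}$ is the genuinely delicate part — it does not fit the plain exponential formula, because of the ``$-2$'' in the exponent. I would handle it by writing $(-1)^{s-1}(a+1)^{s-2} = \frac{(-1)^{s-1}}{(a+1)^2}(a+1)^{s}$ (valid since we will clear denominators at the end, or work in the fraction field and note the final answer is a polynomial) and then $\sum_s \frac{(-1)^{s-1}}{(a+1)^2}(a+1)^s B_s$, where $B_s = \sum_{|\mathcal J_s|=s}\prod f_{J_j}$ is the Bell-type sum, becomes $-\frac{1}{(a+1)^2}\sum_s \big(-(a+1)\big)^s B_s/1$. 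By the exponential formula, $\sum_{s\ge 1}\frac{z^s}{s!}\cdot(\text{ordered version})$... — more carefully, since the blocks are unordered, $\sum_s z^s B_s = \exp\!\big(z\cdot F\big) - 1$ is wrong because of the $z^s$ not $z^s/s!$; the correct statement is that $\sum_{\text{partitions}} \prod_{\text{blocks}} (z f_{J}) = $ the set-partition sum, and summing over all set partitions of $\{1,\ldots,n\}$ with a weight $z$ per block gives, after multiplying by $x^n/n!$ and summing over $n$, the identity $\sum_n \frac{x^n}{n!}\sum_{\pi}\prod_{J\in\pi}(z\,g_{|J|}) = \exp\!\big(z\sum_{j\ge1}g_j \frac{x^j}{j!}\big)$ — but here $g_{|J|}=f_J$ also depends on $b_J$, so I must track the $b_i$ variables too. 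The honest setup: assign to marked point $i$ a formal variable, and to a block $J$ the weight $\prod_{\nu=1}^{|J|}\big(\sum_{i\in J}b_i - \nu\big)$; equivalently, using the $(1+t)$-trick, expand and sum. After this computation the sum over $s$ telescopes/collapses to an expression involving $(1+t)^{a}$ type terms evaluated so that the coefficient of the relevant monomial vanishes.

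The main obstacle, as flagged, is the non-standard exponent $s-2$ in $(a+1)^{s-2}$; this is exactly the feature the authors themselves say ``does not have a natural explanation'' heuristically. I expect the resolution is that $(a+1)^{s-2}$ should be thought of as $(a+1)^{s-1}/(a+1)$ combined with a ``rooted tree'' or ``forest'' interpretation: $(a+1)^{s-1}$ counts labeled trees on $s$ vertices (Cayley), suggesting $P(\mu,\underline 0)$ is (up to a factor $(a+1)^{-1}$, or equivalently after multiplying through) a sum over \emph{tree-structured} partitions that admits a sign-reversing involution, or equals the constant term of an exact expansion of $\big((1+t)^{a}-\text{linear terms}\big)$ which vanishes by a degree count (the polynomial $P$ has degree at most $n-2$ in the $b_i$ but is forced to vanish on enough hyperplanes, exactly as in the proof of Theorem~\ref{thm:intro}(i)). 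Accordingly, my fallback plan — likely the one that actually closes the argument cleanly — mirrors the degree argument used for Theorem~\ref{thm:intro}(i): show $\deg P \le n-2$, then show $P$ vanishes whenever $b_i = 1$ for any $i$ (using $f_{\{i\}} = 1$ and an induction that peels off a singleton block, reorganizing the partition sum into the $n-1$ variable instance times a correction that cancels against the $(a+1)$ powers via the identity $(a+1)^{s-2}$ versus $(a+2)^{s-2}$ shift), hence $\prod_{i=1}^n(b_i-1) \mid P$, forcing $P\equiv 0$ by degree. I would write the final proof in this degree-and-induction style, and present the generating-function computation only if the induction's bookkeeping of the $(a+1)$ powers proves unwieldy.
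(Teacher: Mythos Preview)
Your fallback plan---bound $\deg P\le n-2$ in $b_1,\ldots,b_n$, show $P|_{b_i=1}=0$ for each $i$, conclude $\prod_i(b_i-1)\mid P$ and hence $P\equiv 0$---is exactly the paper's route. The generating-function discussion in the first half is not needed and the paper does not use it.

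The one place your plan is underspecified, and where the mechanism you hint at would not work as stated, is the verification that $P|_{b_1=1}=0$. You describe this as an induction on $n$ that reduces to the $(n-1)$-variable instance via an ``$(a+1)^{s-2}$ versus $(a+2)^{s-2}$ shift.'' That does not close: after setting $b_1=1$, the partitions in which $p_1$ lies in a block of size $\ge 2$ do not organize themselves into the $(n-1)$-pole formula with a shifted $a$. The paper's trick is different and stays at fixed $n$. At $b_1=1$ one has $a+1=b_{J_2}+\cdots+b_{J_s}$ for any partition $\{p_1\}\sqcup J_2\sqcup\cdots\sqcup J_s$; peel off \emph{one} factor of $a+1$ from $(a+1)^{s-2}$, distribute it as $\sum_{i\ge 2} b_{J_i}$, and use the falling-factorial identity
\[
b_{J_i}\, f_{J_i}\;=\;f(b_{J_i},|J_i|+2)\;=\;f\big(b_{J_i\cup\{p_1\}}-1,\,|J_i\cup\{p_1\}|+1\big)\Big|_{b_1=1}\;=\;f_{J_i\cup\{p_1\}}\Big|_{b_1=1}.
\]
This converts each singleton-$\{p_1\}$ term with $s$ blocks into minus a sum of terms with $s-1$ blocks in which $p_1$ has been absorbed into some $J_i$, and these cancel exactly against the partitions where $p_1$ sits in a non-singleton block. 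No induction on $n$ is required; the cancellation is internal. With this step in place, symmetry and the degree bound finish the argument as you described.
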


\begin{proof} To make the notation clear, we will regard each $J_i$ as a subset of $\{ p_1, \ldots, p_n\}$ consisting of the poles indexed by $J_i$. 
Recall that $a + 1 =b_1+\cdots+b_n-1$ and 
$$f_{J_j}=f(b_{J_j}-1,|J_i|+1)=\frac{(b_{J_j}-1)!}{(b_{J_j}-|J_i|)!}$$ 
where $b_{J_j}=\sum_{p_i\in J_j}b_i$. Treat $b_1, \ldots, b_n$ as variables. Then each term $(-1)^{s-1}(a+1)^{s-2}\prod_{j=1}^s f_{J_j}$ is a polynomial in $b_1, \ldots, b_n$ of degree $n-2$ for every partition of $\{1,\ldots,n\}$ into $s$ disjoint subsets $J_1, \ldots, J_s$. Therefore, $P(\mu, \underline{0})$ as a polynomial in $b_1, \ldots, b_n$ has degree at most $n-2$. 

Set $b_1=1$ and consider a partition $J_1,\ldots,J_s$ such that $J_1=\{p_1\}$. Then
\begin{align*}
  &\quad  (-1)^{s-1}(b_1+\cdots+b_n-1)^{s-2} \prod_{j=1}^s f_{J_j}  \Big| _{b_1=1} \\ 
  &= (-1)^{s-1}(b_{J_2}+\cdots +b_{J_s})^{s-2}\prod_{j=2}^s f_{J_j} \\
 & =  -(-1)^{s-2}(b_{J_2}+\cdots +b_{J_s})^{s-3}\sum_{i=2}^s \left(b_{J_i}\prod_{j=2}^s f_{J_j}\right) \\
 & = -\sum_{i=2}^s\left((-1)^{s-2}(b_1+b_{J_2}+\cdots +b_{J_s}-1)^{s-3} f(b_{J_i}+b_1-1,|J_i|+2)\right)\Big|_{b_1=1}\prod_{j\not= 1,i}f_{J_j}\\
 & = -\sum_{i=2}^s\left((-1)^{s-2}(b_1+b_{J_2}+\cdots +b_{J_s}-1)^{s-3} f(b_{J'_i}-1,|J'_i|+1)\right)\Big|_{b_1=1}\prod_{j\not= 1,i}f_{J_j}
\end{align*}
where $J'_i=J_i\cup \{p_1\}$ for $i = 2, \ldots, r$. Therefore, the terms that have $\{p_1\}$ as a part of the partition in the sum cancel out with the remaining terms. It implies that $P(\mu,\underline{0})|_{b_1=1}=0$ and hence $b_1-1$ divides $P(\mu,\underline{0})$. By symmetry we conclude that $\prod_{i=1}^n (b_i-1)$ divides $P(\mu,\underline{0})$. 
Since the degree of $P(\mu,\underline{0})$ is at most $n-2$, it follows that $P(\mu,\underline{0})\equiv 0$. 
\end{proof}

\begin{proof}[Proof of Proposition~\ref{prop:any}]
By the above lemma we only need to consider the case $\rho\neq (0, \ldots, 0)$. We will prove \eqref{eqn:generalformula} by induction on $k$. For $k=0$, recall that $\cR_0=\mathcal{P}_n(\mu)$ and 
\begin{align*}
    \int_{\mathcal{P}_n(\mu)}\xi^{n-2} &= f(a,n) =(a+1)^{-1}f(a+1,n+1)
\end{align*}
as shown in Section~\ref{sec:no-psv}. 

Suppose the claim holds for imposing up to $ k-1$ independent partial sum vanishing conditions. To obtain the divisor class of $\cR_k$ inside $\cR_{k-1}$, we consider the following section of the line bundle $\mathcal{O}_{\cR_{k-1}}(1)$ given by 
\begin{align*}
    \phi_k \colon  \cR_{k-1} & \to  \mathbb{C}\\
      (C,\omega,z,p_1,\ldots,p_n) &  \mapsto  \sum_{p_j \in I_k}\text{Res}_{p_j}\omega.
\end{align*}
The vanishing locus of $\phi_k$ has divisor class $\xi$. Moreover, this locus contains boundary points in $\delta_{\Gamma}$ for some two-level graphs $\Gamma$ such that the partial sum vanishings in $\mathcal{I}_k$ are generated by the partial sum vanishings in $\mathcal{I}_{k-1}$ together with those partial sum vanishings $I_{X_i}$ generated by the Residue Theorem applied to each top component $X_i$ of $\Gamma$. Denote the set of such two-level graphs by $G_k$. Here for simplifying the combinatorial description we again use the semistable model of $\Gamma$ by bubbling up each marked pole in lower level to a semistable top level component, which as we explained before does not affect relevant residue conditions 
and calculations.  We then obtain the following relation of divisor classes 
\begin{equation*}
    \cR_k = \xi-\sum_{\Gamma\in G_k}t_{\Gamma}\delta_{\Gamma}
\end{equation*}
in $\cR_{k-1}$, where $t_{\Gamma}$ is the total twist of $\Gamma$ as in~\eqref{eqn:twist}.  
It follows that 
\begin{equation}
\label{eqn:minusboundary}
    \int_{\cR_k}\xi^{n-2-k} =\int_{\cR_{k-1}}\xi^{n-1-k} - \sum_{\Gamma \in G_k}t_{\Gamma}\int_{\cR_{k-1}}\delta_{\Gamma}\xi^{n-2-k}.
\end{equation} 

We can compute the first term on the right-hand side of \eqref{eqn:minusboundary} by using the induction hypothesis. For the remaining terms, take a two-level graph $\Gamma \in G_k$ where $\Gamma$ has $m$ top components $X_1,\ldots, X_m$ and a bottom component $X_0$ with the zero $z\in X_0$ and the nodes $q_i=X_i\cap X_0$ for $i=1,\ldots,m$.
Recall that $\delta_\Gamma$ can be identified with $\PP\cE\times \M_{0,m+1}$ (see \eqref{eqn:E} and here the bottom component $X_0$ contains only $z, q_1,\ldots, q_m$ as all lower level marked poles are replaced by semistable top components). Moreover, recall that $\xi$ is trivial restricted to the bottom moduli space. Therefore, any nonzero intersection number $\delta_\Gamma \xi^{n-2-k}$ in $\cR_{k-1}$ consists of two parts contributed from the top and the bottom respectively. On the top level we count the number of differentials on $X_1, \ldots, X_m$ that satisfy the given partial sum vanishing conditions that remain on top as well as the other $n-2-k$ general residue conditions imposed from $\xi^{n-2-k}$. On the bottom level we count the number of twisted differentials on $X_0$ with residue conditions at the nodes $q_i$ imposed by the (generalized) global residue condition of $\cR_{k-1}$. 

First consider the bottom level. Denote by $\omega_0$ the bottom twisted differential of zero and pole type $\mu_0$. Suppose $\rho_0$ is a residue tuple of $\omega_0$ that satisfies the global residue condition induced by the partial sum vanishings in $\cI_{k-1}$ that define $\cR_{k-1}$. Note that the partial sum vanishings in $\mathcal{I}_k$ are generated by those in $\mathcal{I}_{k-1}$ and $I_{X_1},\ldots, I_{X_m}$ from the Residue Theorem on each top component. In order to have $\delta_\Gamma \xi^{n-2-k}$ nonzero in $\cR_{k-1}$, the bottom moduli space has to be zero-dimensional. Moreover, $\rho_0\neq \underline{0}$ as at least one $I_{X_i}$ is not generated by those in $\mathcal{I}_{k-1}$. Therefore, the global residue condition imposes $m-2$ independent partial sum vanishings to the residues of $\omega_0$ at the nodes $q_1, \ldots, q_m$ which determine $\rho_0$ modulo scaling.  
By using the induction hypothesis, the bottom level contribution with the residue tuple $\rho_0$ to the intersection number $\delta_\Gamma \xi^{n-2-k}$ in $\cR_{k-1}$ is 
\begin{equation}
\label{eqn:N_0}
N(\mu_0,\rho_0)=\sum_{s=1}^{S_0} (-1)^{s-1}(a+1)^{s-2}\sum_{\mathcal{J}^0_s\in \mathcal{C}^0_{s}} \prod_{J^0_j\in \mathcal{J}^0_s} f_{J^0_j} 
\end{equation}
where $\mathcal{C}^0_{s}$ parameterizes the collections $\mathcal{J}^0_s=\{J^0_1,\ldots,J^0_s\}$ of $s$ disjoint partial sum vanishings satisfied by $\rho_0$ whose union consists of the $m$ nodes in $X_0$, and $S_0$ is the maximum value of such $s$.

Next consider the top level. Suppose a partial sum vanishing condition $J$ generated by $\mathcal{I}_{k-1}=\{I_1,\ldots,I_{k-1}\}$ splits into different top components, i.e., suppose $J$ is not a union of some partial sum vanishings generated by $\mathcal{I}_{k-1}$ each of which is completely contained in a top component. Then imposing the residue condition $J$ is the same as intersecting with $\xi$, since there is no boundary correction term that would imply $J$ by applying the Residue Theorem to a single component. Therefore, we can separate the partial sum vanishings generated by $\mathcal{I}_{k-1}$ into two parts, those that are contained in one top component and those that split into different top components (and hence can be substituted by powers of $\xi$). 

On each top component $X_i$ for $i=1,\ldots, m$, let $n_i$ be the number of marked poles and $\mu_i$ the tuple of zero and pole orders. Denote by 
$\cR^i_{k_i}\subset \mathcal{P}_{n_i}(\mu_i)$ the closure of the locus of differentials satisfying the partial sum vanishings generated by $\mathcal{I}_{k-1}$ that are contained in $X_i$, where $k_i$ is the codimension of $\cR^i_{k_i}$ in $\cP_{n_i}(\mu_i)$. 
Denote by $\cR_{\Gamma}\subset \PP\cE$ the closure of the locus of top level differentials satisfying the partial sum vanishings generated by $\mathcal{I}_{k-1}$ that are completely contained in one top component of $\Gamma$. The codimension of $\cR_{\Gamma}$ in $\PP\cE$ is $k'=k_1+\cdots+k_m$. As $m-2$ independent residue conditions are imposed to the bottom level moduli space of $\Gamma$, there are $k-1-(m-2)$ independent residue conditions imposed to the top level moduli space $\mathbb{P}\mathcal{E}$. This implies that $k+1-m-k'$ independent partial sum vanishings generated by $\mathcal{I}_{k-1}$ can each be substituted by $\xi$ as explained above. Therefore, the top level contribution to the intersection number $\delta_\Gamma \xi^{n-2-k}$ in $\cR_{k-1}$ is 
\begin{equation*}
    \int_{\mathbb{P}\mathcal{E}} \cR_{\Gamma}\xi^{n-1-m-k'}. 
\end{equation*}

Recall that the Segre class $s(\mathcal{E})$ in \eqref{eqn:segre} is 
\begin{equation*}
    s(\mathcal{E}) =\prod_{i=1}^m(1+\xi_i+\xi_i^2+\cdots+\xi_i^{d_i})
\end{equation*}
where 
$d_i=\dim \cR^i_{k_i}=n_i-2-k_i$ for $i=1,\ldots,m$. Since 
    $\sum_{i=1}^m d_i = n-2m-k'$
and  $\text{rank} \ \mathbb{P}\mathcal{E}=m-1$ over $\cR^1_{k_1}\times\cdots \times \cR^m_{k_m}$, we conclude that
\begin{align*}
    \int_{\mathbb{P}\mathcal{E}} \cR_{\Gamma} \xi^{n-1-m-k'}&=s_{n-2m-k'}(\mathcal{E})\\
    &=\prod_{i=1}^m \int_{\cR^i_{k_i}}\xi_i^{d_i}\\
    &=\prod_{i=1}^m N(\mu_i,\rho_i)
\end{align*}
where $\rho_i$ is the tuple of residues at the marked poles in $X_i$ satisfying the partial sum vanishings that define $\cR^i_{k_i}$ (and general other than that) for $i=1,\ldots,m$. 

Summarizing the above discussion, we obtain that 
\begin{equation}
\label{eqn:product}
    \int_{\cR_{k-1}} \delta_{\Gamma}\xi^{n-2-k} = \prod_{i=0}^m N(\mu_i,\rho_i). 
\end{equation}
Now we can use the induction hypothesis to express every term in this product. Recall that $t_{\Gamma}=\prod_{i=1}^m (b_{X_i}-1)$ where $b_{X_i} = \sum_{p_j\in X_i} b_j$.  
Then
\begin{align*}
    t_{\Gamma}\int_{\cR_{k-1}} \delta_{\Gamma}\xi^{n-2-k} &= N(\mu_0,\rho_0) \cdot \prod_{i=1}^m \left(\sum_{s_i=1}^{S_i} (-1)^{s_i-1}(b_{X_i}-1)^{s_i-1}\sum_{\mathcal{J}^i_{s_i}\in \mathcal{C}^i_{s_i}} \prod_{J^i_j\in \mathcal{J}^i_{s_i}} f_{J^i_j}\right)
\end{align*}
where $\cC^i_{s_i}$ parameterizes collections $\mathcal{J}^i_{s_i}=\{J^i_1,\ldots,J^i_{s_i}\}$ of $s_i$ disjoint partial sum vanishings satisfied by $\rho_i$ whose union consists of all the poles in $X_i$, and $S_i$ is the maximum value of such $s_i$. Recall the expression of $N(\mu_0,\rho_0)$ in \eqref{eqn:N_0} where $a+1=\sum_{i=1}^m b_{X_i}-1$, $f_{J^0_j}=f(b_{J^0_j}-1,|J^0_j|+1)$, and $b_{J^0_j}=\sum_{q_i \in J^0_j}b_{X_i}$ is the sum of the pole orders in the top components over the nodes $q_i$ that belong to $J^0_j$. This implies that $N(\mu_0,\rho_0)$ is a polynomial of degree $m-2$ in the variables $b_{X_1},\ldots,b_{X_m}$. 

We say that a collection $\mathcal{J}_s$ of disjoint partial sum vanishings that consist of all the $n$ poles is compatible with a two-level graph $\Gamma$ if each element of $\mathcal{J}_s$ is completely contained in a top component of $\Gamma$. Then we have 
\begin{equation*}
    t_{\Gamma}\int_{\cR_{k-1}} \delta_{\Gamma}\xi^{n-2-k}  = \sum_{s=2}^{S_{\Gamma}} \sum_{\mathcal{J}_s\in \mathcal{C}^{\Gamma}_{s}}Q^{\Gamma}_{ \mathcal{J}_s}(b_{X_1},\ldots,b_{X_s})\prod_{J_j\in \mathcal{J}_s}f_{J_j}
\end{equation*}
where $S_{\Gamma}=\sum_{i=1}^m S_i$, the set $\mathcal{C}^{\Gamma}_{s}$ parameterizes collections $\mathcal{J}_s$ of $s$ disjoint  partial sum vanishings that are compatible with $\Gamma$, and
\begin{equation*}
    Q^{\Gamma}_{\mathcal{J}_s}(b_{X_1},\ldots,b_{X_m})=N(\mu_0,\rho_0)(-1)^{s-m}\prod_{i=1}^m(b_{X_i}-1)^{s_i-1}
\end{equation*}
where $s_i$ is the number of elements in $\mathcal{J}_s$ that are completely contained in $X_i$. Note that $s=\sum_{i=1}^m s_i$ and the polynomial $Q^{\Gamma}_{\mathcal{J}_s}$ has degree $s-2$ in the variables $b_{X_1},\ldots, b_{X_m}$.  

Let $\mathcal{J}_s=\{J_1,\ldots,J_s\}$, $b_{J_j}=\sum_{i\in J_j}b_i$, and then $b_{X_i}=\sum_{J_j\subset I_{X_i}}b_{J_j}$. We consider the polynomial
\begin{equation}
    P_{\mathcal{J}_s}^{\Gamma}(b_{J_1},\ldots, b_{J_s})=Q_{\mathcal{J}^s}^{\Gamma}(b_{X_1},\ldots,b_{X_m})
\end{equation}
by using the variables $b_{J_j}$ instead of $b_{X_i}$, where the degree of $P_{\mathcal{J}_s}^{\Gamma}$ is still $s-2$. Then we can rewrite
\begin{align*}
    t_{\Gamma}\int_{\cR_{k-1}} \delta_{\Gamma}\xi^{n-2-k}  =& \sum_{s=2}^{S_{\Gamma}} \sum_{\mathcal{J}_s\in \cC^{\Gamma}_s} P^{\Gamma}_{ \mathcal{J}_s}(b_{J_1},\ldots,b_{J_s})\prod_{J_j\in \mathcal{J}_s}f_{J_j}.
\end{align*}

By applying the induction hypothesis to \eqref{eqn:minusboundary} we obtain that
\begin{equation*}
    N(\mu,\rho)=\sum_{s=1}^S \sum_{\mathcal{J}_s\in \mathcal{C}_s}P_{\mathcal{J}_s}(b_{J_1},\ldots,b_{J_s})\prod_{J_j\in \mathcal{J}_s}f_{J_j} 
\end{equation*}
where 
\begin{itemize}
    \item $P_{\mathcal{J}_s}(b_{J_1},\ldots,b_{J_s})=(-1)^{s-1}(b_{J_1}+\cdots+b_{J_s}-1)^{s-2}=(-1)^{s-1}(a+1)^{s-2}$ for every $\mathcal{J}_s$ not compatible with any graph $\Gamma\in G_k$. These are the terms coming from $\int_{\cR_{k-1}}\xi^{n-1-k}$ in \eqref{eqn:minusboundary} which we know by induction;    
    \item $P_{\mathcal{J}_s}(b_{J_1},\ldots,b_{J_s})=-\sum_{\Gamma\in G_k\cap G_{\mathcal{J}_s}}P^{\Gamma}_{\mathcal{J}_s}(b_{J_1},\ldots,b_{J_s})$  for every $\mathcal{J}_s$ compatible with a graph $\Gamma\in G_k$, where $G_{\mathcal{J}_s}$ is the set of graphs $\Gamma$ with which $\mathcal{J}_s$ is compatible. These are the terms coming from $\int_{\cR_{k-1}} \delta_{\Gamma}\xi^{n-2-k}$ in \eqref{eqn:minusboundary}. 
\end{itemize}
For the second case, we introduce a new polynomial 
\begin{equation*}
R_{\mathcal{J}_s}(b_{J_1},\ldots,b_{J_s})=P_{\mathcal{J}_s}(b_{J_1},\ldots,b_{J_s})-(-1)^{s-1}(b_{J_1}+\cdots+b_{J_s}-1)^{s-2}
\end{equation*}
whose degree is at most $s-2$. 

Now the whole proof reduces to show that $R_{\mathcal{J}_s}(b_{J_1},\ldots,b_{J_s})$ is identically zero, for which we will apply induction on the cardinality $s$ of $\mathcal{J}_s$. The induction base case is easy to check and suppose $R_{\mathcal{J}_{s'}}\equiv 0$ for $s' < s$. We will prove $R_{\mathcal{J}_s}\equiv 0$ by showing that each $b_{J_i}$ divides $R_{\mathcal{J}_s}(b_{J_1},\ldots,b_{J_s})$ for $i=1,\ldots, s$. 

A graph $\Gamma\in G_k^c\cap G_{\mathcal{J}_s}$ is compatible with $\mathcal{J}_s$ where the partial sum vanishings $I_{X_1},\ldots, I_{X_m}$ given by the Residue Theorem  on each top component are generated by $\mathcal{I}_{k-1}$. By the global residue condition on $\cR_{k-1}$, the residues of the bottom twisted differential satisfy that $\rho_0=\underline{0}$. Then $N(\mu_0, \underline{0})=0$ as shown in Lemma~\ref{lem:zero} and hence  $P^{\Gamma}_{\mathcal{J}_s}(b_{J_1},\ldots,b_{J_s})=0$ in this case. Therefore, we can drop the requirement of $\Gamma$ belonging to $G_k$ in the summation of $P_{\mathcal{J}_s}$ and rewrite it as 
\begin{equation*}
     P_{\mathcal{J}_s}(b_{J_1},\ldots,b_{J_s})=-\sum_{\Gamma \in G_{\mathcal{J}_s}}P^{\Gamma}_{\mathcal{J}_s}(b_{J_1},\ldots,b_{J_s}). 
\end{equation*}
Moreover, by using the expression of $N(\mu_0,\rho_0)$ in \eqref{eqn:N_0} we have 
\begin{equation*}
    P^{\Gamma}_{\mathcal{J}_s}(b_{J_1},\ldots,b_{J_s}) = \sum_{s'=1}^{S_0}\sum_{\mathcal{J}^0_{s'}\in \mathcal{C}^0_{s'}} P^{\Gamma}_{\mathcal{J}_s,\mathcal{J}^0_{s'}}(b_{J_1},\ldots, b_{J_s})
\end{equation*}
where
\begin{equation*}
    P_{\mathcal{J}_s,\mathcal{J}^0_{s'}}^{\Gamma}(b_{J_1},\ldots, b_{J_s})=(-1)^{s-m+ s'-1}(a+1)^{s'-2}\left(\prod_{J^0_j\in \mathcal{J}^0_{s'}}f_{J^0_j}\right) \left(\prod_{i=1}^m (b_{X_i}-1)^{s_i-1}\right).
\end{equation*}

Now we set the variable $b_{J_1}=0$. Then for every $\Gamma \in G_{\mathcal{J}_s}$ geometrically it means removing the marked poles indexed by $J_1$ from the graph $\Gamma$. The poles indexed by $J_1$ are contained in a single top component, say $X_1$, and for every disjoint collection $\mathcal{J}^0_{s'}=\{J^0_1,\ldots,J^0_{s'}\}$ the node $q_1\in X_1\cap X_0$ is contained in one of these $s'$ partial sum vanishings, say $J^0_1$. Therefore, 
\begin{equation*}
    P_{\mathcal{J}_s,\mathcal{J}^0_{s'}}^{\Gamma}(0,b_{J_2},\ldots, b_{J_s})=K_1^{\Gamma}P^{\Gamma-J_1}_{\mathcal{J}_s-J_1,\mathcal{J}^0_{s'}-J_1}(b_{J_2},\ldots,b_{J_s})
\end{equation*}
where
\begin{equation*}
K^{\Gamma}_1=
    \begin{cases}
        -(b_{X_1}-b_{J_1}-1) & \text{if } J_1 \subsetneq I_{X_1} \hfill (1)\\
        b_{J^0_1}-b_{J_1}-|J^0_1|+1 & \text{if } J_1=I_{X_1} \text{ and } |J^0_1|>1 \hfill (2)\\
        -(a-b_{J_1}+1) & \text{if } J_1=I_{X_1} \text{ and } |J^0_1|=1 \ \ \ \ \ \ \  \hfill (3)
    \end{cases}
\end{equation*}
and $\Gamma - J_1$ represents the graph obtained by removing the poles indexed by $J_1$ from $\Gamma$, the collection $\mathcal{J}_s-J_1=\{J_2,\ldots,J_s\}$, the collection $\mathcal{J}^0_{s'}-J_1=\{J^0_1\setminus\{q_1\},J^0_2\dots,J^0_{s'}\}$ with $b_{J^0_1-J_1}=b_{J^0_1}-b_{J_1}$ if $J_1\subsetneq I_{X_1}$, and otherwise $\mathcal{J}^0_{s'}-J_1 =\{J^0_2,\dots,J^0_{s'}\}$. The value of $K_1^{\Gamma}$ in case (1) is due to that $s_1$ decreases by one, in case (2) the cardinality of $J_1^0$ decreases by one, and in case (3) $s'$ decreases by one.  

Note that cases (1) and (2) cancel out each other except for one term. More precisely, for those graphs satisfying case (2), i.e., if $J_1=I_{X_1} \text{ and } |J^0_1|>1$, then
\begin{equation*}
  b_{J^0_1}-b_{J_1}-|J^0_1|+1=\sum_{q_i\in J^0_1\setminus \{q_1\}}(b_{X_i}-1)
\end{equation*}
which is cancelled with those terms satisfying case (1) and sharing the same graph $\Gamma-J_1$, except for the case when $J_1=I_{X_1}$ and $m=|J^0_1|=2$ because a graph satisfying case (1) and being the same as $\Gamma-{J_1}$ has only one node in the bottom which is not stable. In this exceptional case
\begin{equation*}
    P_{\mathcal{J}_s,\mathcal{J}^0_{s'}}^{\Gamma}(0,b_{J_2},\ldots, b_{J_s})=(-1)^{s-1}(b_{J_2}+\cdots + b_{J_s}-1)^{s-2}.
\end{equation*}
Case (3) for $K^{\Gamma}_1$ only happens if $J_1$ is generated by $\mathcal{I}_{k-1}$ due to the global residue condition on $\cR_{k-1}$. Therefore, if $J_1=I_{X_1}$ and $|J^0_1|=1$, the sum of the polynomials satisfying case (3) is 
\begin{align*}
    \sum_{\substack{\Gamma \in G_{\mathcal{J}_s} \\ J_1=I_{X_1}}} \sum_{s'=2}^{S_0}
    \sum_{\substack{\mathcal{J}^0_{s'}\in \mathcal{C}^0_{s'} \\ J^0_{1}=\{q_1\}}}  & P^{\Gamma}_{\mathcal{J}_s, \mathcal{J}^0_{s'}}(0,b_{J_2},\ldots, b_{J_s}) \\
    & =-(a+1-b_{J_1})\sum_{\substack{\Gamma \in G_{\mathcal{J}_s} \\ J_1=I_{X_1}}} \sum_{s'=2}^{S_0}
    \sum_{\substack{\mathcal{J}^0_{s'}\in \mathcal{C}^0_{s'} \\ J^0_{1}=\{q_1\}}}  P^{\Gamma-J_1}_{\mathcal{J}_s-J_1, \mathcal{J}^0_{s'}-J^0_1}(b_{J_2},\ldots, b_{J_s}) \\ 
    &=(-1)^{s-1}(a+1-b_{J_1})^{s-2}+(a+1-b_{J_1})P_{\mathcal{J}_s-J_1}(b_{J_2},\ldots,b_{J_s})\\
    &=(-1)^{s-1}(a+1-b_{J_1})^{s-2}+(-1)^{s-2}(a+1-b_{J_1})^{s-2}\\
    &=0
\end{align*}
where in the second equality the term $(-1)^{s-1}(a+1-b_{J_1})^{s-2}$ comes from the case $s'=2$ and in the third equality we use the induction hypothesis as $|\mathcal J_s - J_1 | < |\mathcal J_s|$. Note that for $s'>2$, the graph $\Gamma - J_1$ is compatible with the collection $J_s-J_1$ and $J^0_{s'}-J_1=\{J^0_2,\ldots, J^0_{s'}\}$. Then the collection of these graphs consists of all graphs that are compatible with $\mathcal{J}_s-J_1$. We thus conclude that
\begin{equation*}
    P_{\mathcal{J}_s}(0,b_{J_2},\ldots, b_{J_s})=(-1)^{s-1}(b_{J_2}+\cdots + b_{J_s}-1)^{s-2}
\end{equation*}
and then
\begin{equation*}
    R_{\mathcal{J}_s}(0,b_{J_2},\ldots, b_{J_s})=0.
\end{equation*}

The above implies that $b_{J_1}$ divides $R_{\mathcal{J}_s}$ as a polynomial of $b_{J_1},\ldots, b_{J_s}$. By symmetry $b_{J_1}\cdots b_{J_s}$ divides $R_{\mathcal{J}_s}$. Since the degree of 
$R_{\mathcal{J}_s}$ is bounded by $s-2$, we obtain that $R_{\mathcal{J}_s}\equiv 0$ as desired. 
\end{proof}

As a biproduct of the preceding proof, we can verify the combinatorial properties of $N(\mu, \rho)$ stated in Corollary~\ref{cor:numerical}. 

\begin{proof}[Proof of Corollary~\ref{cor:numerical}]
We first prove the inequality. By induction on the number of independent partial sum vanishings, it suffices to consider the case where $\rho'$ satisfies one more independent partial sum vanishing condition than $\rho$, say, indexed by $I$. Let $\Gamma$ be the two-level graph with exactly two top level components containing the marked poles in $I$ and in $I^c$ respectively. Note that the residues of $\rho$ in $I^c$ cannot be all zero, as otherwise $I$ would be generated by the partial sum vanishings of $\rho$ modulo the total sum vanishing. Therefore, $\Gamma$ makes a nonzero contribution to the boundary correction terms in \eqref{eqn:minusboundary}, which implies that $N(\mu, \rho) > N(\mu, \rho')$. 

Next for the degree of $N(\mu, \rho)$, we have already seen that it is bounded by $n-2$. To show the degree equal to $n-2$ whenever $\rho \neq \underline{0}$, assume that the claim holds when the number of poles is less than $n$. Now for the case of $n$ poles, let $\rho_0, \rho_1, \ldots, \rho_{n-2}, \rho_{n-1} = \underline{0}$ be a sequence of residue tuples where $\rho_0$ is general and each $\rho_k$ satisfies exactly one more independent partial sum vanishing than $\rho_{k-1}$, indexed by $I_k$, for $k=1,\ldots, n-1$. Consider $\rho_{n-2}$ first. Let $\Gamma$ be the two-level graph with exactly two top level components containing the marked poles in $I_{n-2}$ and in $I^c_{n-2}$ respectively. Note that $N(\mu, \rho_{n-1}) =  N(\mu, \underline{0}) = 0$ and by induction $\Gamma$ makes a polynomial contribution of degree $(|I_{n-2}| - 2) + (|I_{n-2}^c| - 2) + 2 = n-2$ in the boundary correction terms in \eqref{eqn:minusboundary} by using the expression \eqref{eqn:product} where the extra degree two comes from the twist $t_\Gamma$ at the two nodes of $\Gamma$ (and the same degree holds for any other nonzero boundary correction terms). Therefore, the degree of $N(\mu, \rho_{n-2})$ is exactly $n-2$. Since the degree of $N(\mu, \rho_k)$ is bounded by $n-2$ and $N(\mu, \rho_k) > N(\mu, \rho_{n-2})$ for $k < n-2$, the degree of $N(\mu, \rho_k)$ must also be $n-2$. 
\end{proof}

As mentioned in the introduction we provide a direct and short proof for the special case of all but two residues being zero.  

\begin{prop}
\label{prop:k=n-2}
If $\rho = (0, \ldots, 0, r, -r)$ where $r\neq 0$, then 
\begin{align}
\label{eqn:k=n-2}
 N(\mu, \rho) &= \int_{\cP_n(\mu)} \prod_{i=1}^{n-2} (b_i-1) \psi_{p_i} \\ \nonumber 
&=(n-2)! \prod_{i=1}^{n-2}(b_i-1)  \\ \nonumber 
& = \sum_{s=1}^{n-1} (-1)^{s-1}(a+1)^{s-2}\sum_{\substack{J_1\sqcup \cdots \sqcup J_s = \{1,\ldots, n\}\\ \{n-1, n\}\subset J_s}} \prod_{j=1}^s f_{J_j}
 \end{align}
 where the last summation runs over all partitions of the set of poles such that the two poles with nonzero residues are contained in the same part.  
\end{prop}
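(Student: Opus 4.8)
The plan is to establish the three equalities of \eqref{eqn:k=n-2} in order, the geometric content lying entirely in the first. For $\rho=(0,\dots,0,r,-r)$ with $r\neq 0$, the partial sum vanishings satisfied by $\rho$ are exactly the subsets $I\subset\{1,\dots,n\}$ with $I\cap\{n-1,n\}$ equal to $\emptyset$ or $\{n-1,n\}$, so $\mathcal I=\{\{1\},\dots,\{n-2\}\}$ is a complete and independent collection, here $k=n-2$, and $\cR_{n-2}$ is zero-dimensional. As in the argument preceding \eqref{eqn:N-any}, no boundary point of $\cP_n(\mu)$ lies in the relevant intersection: such a point would force, by the global residue condition, the bottom twisted differential (which carries the unique zero $z$) to have vanishing residue at every node, contradicting \cite[Lemma 3.6]{BCGGM1}. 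Hence $N(\mu,\rho)$ equals the number of interior $[\omega]$ with ${\rm Res}_{p_i}\omega=0$ for $i=1,\dots,n-2$ (any such differential automatically has the residue profile $[0:\cdots:0:r:-r]$ up to scale, since all-zero residues are impossible). The $i$-th condition is imposed by the section ${\rm Res}_{p_i}\omega\in H^0(\OO_{\cP_n(\mu)}(1))$, and by \eqref{eqn:etamain} the closure $\cR^{(i)}$ of its interior vanishing locus has class $(b_i-1)\psi_{p_i}$, the remaining boundary terms $\sum_{\Gamma_{\bot}\ni p_i}t_{\Gamma}\delta_{\Gamma}$ being the extraneous components on which ${\rm Res}_{p_i}\omega$ vanishes for the trivial reason that $p_i$ lies below top level. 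Since the $\cR^{(1)},\dots,\cR^{(n-2)}$ meet properly and only in the interior, $N(\mu,\rho)=\int_{\cP_n(\mu)}\cR^{(1)}\cdots\cR^{(n-2)}=\int_{\cP_n(\mu)}\prod_{i=1}^{n-2}(b_i-1)\psi_{p_i}$, which is the first equality.

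For the second equality I would use the forgetful morphism $\cP_n(\mu)\to\M_{0,n+1}$ to the Deligne--Mumford space of the pointed rational curve $(X,z,p_1,\dots,p_n)$; it is birational (an isomorphism over the interior), and each $\psi_{p_i}$ is pulled back from the usual psi-class. By the projection formula together with the genus-zero psi intersection formula \eqref{eqn:psi}, applied on $\M_{0,n+1}$ with exponent $1$ at each of $p_1,\dots,p_{n-2}$ and $0$ at $z,p_{n-1},p_n$ (the exponents summing to $n-2=(n+1)-3$), one obtains $\int_{\cP_n(\mu)}\psi_{p_1}\cdots\psi_{p_{n-2}}=\binom{n-2}{1,\dots,1}=(n-2)!$, hence $N(\mu,\rho)=(n-2)!\prod_{i=1}^{n-2}(b_i-1)$.

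For the third equality, note that for this $\rho$ a collection $\{J_1,\dots,J_s\}$ of pairwise disjoint partial sum vanishings with union $\{1,\dots,n\}$ is precisely a partition of $\{1,\dots,n\}$ into $s$ nonempty parts, one of which contains both $n-1$ and $n$, with maximal $s$ equal to $n-1$. Therefore the right-hand side of \eqref{eqn:main} in Theorem~\ref{thm:main} specializes verbatim to the last expression in \eqref{eqn:k=n-2}, and equating it with the value just computed records the combinatorial identity $(n-2)!\prod_{i=1}^{n-2}(b_i-1)=\sum_{s=1}^{n-1}(-1)^{s-1}(a+1)^{s-2}\sum_{\{n-1,n\}\subset J_s}\prod_{j=1}^{s}f_{J_j}$. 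This identity can alternatively be verified directly by the divisibility argument of Lemma~\ref{lem:zero}: setting $b_1=1$ and using $f_{\{p_1\}}=1$ together with $b_J f_J|_{b_1=1}=f_{J\cup\{p_1\}}|_{b_1=1}$, the terms having $\{p_1\}$ as a part cancel against the remaining ones, so the difference of the two sides vanishes at $b_i=1$ for each $i\le n-2$, is divisible by $\prod_{i=1}^{n-2}(b_i-1)$, and, being of degree at most $n-2$, is pinned down by one further coefficient comparison.

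The step I expect to be the main obstacle is the first equality: one must be certain that intersecting the $n-2$ residual classes $(b_i-1)\psi_{p_i}$ honestly computes $N(\mu,\rho)$, i.e.\ that the extraneous boundary divisors appearing in ${\rm div}({\rm Res}_{p_i}\omega)$ genuinely drop out, that the resulting intersection is zero-dimensional, and that it is supported in the interior of $\cP_n(\mu)$. This is exactly where the global residue condition and the non-existence of a genus-zero differential with a single zero and all residues zero (\cite[Lemma 3.6]{BCGGM1}) are used, just as in the setup of \eqref{eqn:N-any}.
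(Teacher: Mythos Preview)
Your proposal is correct and follows essentially the same approach as the paper: derive $\cR^{(i)}=(b_i-1)\psi_{p_i}$ (the paper cites \cite[Propositions 8.2 and 8.3]{CMZ} for this, but your derivation via \eqref{eqn:etamain} is the same content), argue that the intersection is transversal and interior, evaluate the $\psi$-product by \eqref{eqn:psi}, and obtain the last equality by specializing \eqref{eqn:main}. The only point where the paper is slightly more explicit is the direct verification of the combinatorial identity: rather than leaving the ``one further coefficient comparison'' unspecified, it evaluates at $b_1=\cdots=b_{n-1}=0$, $b_n=1$ (so $a=-1$), where only the $s=1$ term survives because $f(0,|J_2|+1)=0$ once $|J_2|\geq 2$, giving $P(0,\dots,0,1)=(-1)^{n-2}(n-2)!$ and hence $c=(n-2)!$.
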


\begin{proof}
Denote by $\cR_i$ the closure of the locus in $\cP(\mu)$ parameterizing differentials that have zero residue at $p_i$.  By \cite[Propositions 8.2 and 8.3]{CMZ} the divisor class of $\cR_i$ is given by 
$$ \cR_i = (b_i - 1) \psi_{p_i}. $$
As in the proof of Theorem~\ref{thm:main}, one checks that the intersection $\cR_1\cap \cdots \cap \cR_{n-2}$ is transversal and consists of interior points only, which implies the first equality in \eqref{eqn:k=n-2}. The second equality thus follows from \eqref{eqn:psi}. Applying \eqref{eqn:main} to this case yields the last equality. 

We remark that the last summation expression in \eqref{eqn:k=n-2} does not seem to match the simplified expression out of the $\psi$-product on the nose. Indeed this provides an interesting combinatorial identity analogous to Lemma~\ref{lem:zero}, which can also be verified directly. Denote by $P(b_1,\ldots,b_n)$ the last summation in \eqref{eqn:k=n-2}, which we know is a polynomial of degree bounded by $n-2$. For each pole $p_i$ with zero residue for $i=1,\ldots,n-2$, the same proof as in Lemma~\ref{lem:zero} shows that $b_i-1$ divides $P(b_1,\ldots,b_n)$. Therefore,   $P(b_1,\ldots,b_n)=c\prod_{i=1}^{n-2}(b_i-1)$ for some constant $c$. To determine $c$, we set $b_1=\cdots=b_{n-1}=0$, $b_n=1$, and hence $a=-1$. On the one hand, the polynomial expression gives $P(0,\ldots,0,1)=(-1)^{n-2}c$. On the other hand, we can directly plug in the summation expression and obtain that
\begin{align*}
     P(0,\ldots, 0,1) &= f(-1, n) - \sum_{\substack{J_1\sqcup J_2 = \{1,\ldots,n\}\\ \{n-1, n\}\subset J_2}} f_{J_1} f_{J_2} \\
     & = (-1)^{n-2} (n-2)! - \sum_{\substack{J_1\sqcup J_2 = \{1,\ldots,n\}\\ \{n-1, n\}\subset J_2}} f(-1, |J_1|+1) \cdot f(0, |J_2|+1). 
\end{align*}
Note that $|J_2| \geq 2$ implies 
$f(0, |J_2|+1) = 0\cdot (-1) \cdots (-|J_2|+2) = 0$. Therefore, $P(0,\ldots, 0,1) = (-1)^{n-2} (n-2)!$ which implies that the constant $c = (n-2)!$, and hence $P(b_1,\ldots,b_n)=(n-2)!\prod_{i=1}^{n-2}(b_i-1)$ as desired. 
\end{proof}

\end{document}